\documentclass[12pt, reqno]{amsart}
\usepackage{amscd,amsmath,amsthm,amssymb,graphics}
\usepackage{amsfonts,amssymb,amscd,amsmath,enumitem,verbatim}
\usepackage[a4paper,top=3cm,left=3cm,right=3cm]{geometry}
\theoremstyle{plain}
\usepackage{hyperref}
\newtheorem{Theorem}{Theorem}
\newtheorem{Lemma}[Theorem]{Lemma}

\newtheorem{Proposition}[Theorem]{Proposition}

\theoremstyle{definition}
\newtheorem{Definition}[Theorem]{Definition}
\newtheorem{Remark}[Theorem]{Remark}

\newtheorem{Example}[Theorem]{Example}

\usepackage{graphicx}

\usepackage{float}
\title{A new algorithm for $p$--adic continued fractions}

\begin{document}

\maketitle
\begin{center}
  NADIR MURRU\textsuperscript{*} AND GIULIANO ROMEO\textsuperscript{\dag}
  \vskip 0.4cm
  \textsuperscript{*} Department of Mathematics, University of Trento \par
  \textsuperscript{\dag} Department of Mathematical Sciences, Politecnico of Turin  \par
  \vskip0.3cm
  email: nadir.murru@unitn.it, giuliano.romeo@polito.it
\end{center}

\begin{abstract}
Continued fractions in the field of $p$--adic numbers have been recently studied by several authors. It is known that the real continued fraction of a positive quadratic irrational is eventually periodic (Lagrange's Theorem). It is still not known if a $p$--adic continued fraction algorithm exists that shares a similar property. In this paper we modify and improve one of Browkin's algorithms. This algorithm is considered one of the best at the present time. Our new algorithm shows better properties of periodicity. We show for the square root of integers that if our algorithm produces a periodic expansion, then this periodic expansion will have pre-period one. It appears experimentally that our algorithm produces more periodic continued fractions for quadratic irrationals than Browkin's algorithm. Hence, it is closer to an algorithm to which an analogue of Lagrange's Theorem would apply.
\end{abstract}

\section{Introduction}
Classical continued fractions are very important in number theory and they are mainly employed in the area of Diophantine approximation. In fact, the algorithm to compute the continued fraction expansion of a real number provides the best rational approximations (see, e.g., \cite{OLDS}). Moreover, a real continued fraction is finite if and only if it represents a rational number and it is periodic if and only if it represents a quadratic irrational (the Lagrange's Theorem). In 1940, Mahler \cite{MAH} started the study of continued fractions in the field of $p$--adic numbers $\mathbb Q_p$. 
Since then, a lot of research has been done in order to find an algorithm that replicates the above properties of real continued fractions. This is still an open problem since a $p$--adic analogue of the Lagrange's Theorem has not been proved yet. In $\mathbb{R}$, the algorithm for continued fractions is essentially derived from the Euclidean algorithm and thus the floor function is mainly involved. However, in $\mathbb{Q}_p$ there is no a canonical choice for a $p$--adic floor function, despite some studies about a $p$--adic version of the Euclidean algorithm (see, \cite{Err, Lager}). Consequently, there have been several attempts for defining a $p$--adic continued fractions algorithm, see \cite{BI, BII, RUB, SCH}.
Schneider's \cite{SCH} and Ruban's \cite{RUB} algorithms do not provide a finite expansion for all rational numbers \cite{BUN, HW, LAO}. Moreover, they do not provide a periodic continued fraction for all quadratic irrationals. A characterization of their periodicity can be found in \cite{TIL} and \cite{CVZ}. Browkin's algorithms \cite{BI, BII} are the most interesting since they stop in a finite number of steps if and only if the input is a rational number, similarly to continued fractions in $\mathbb{R}$. 

Given $\alpha=\sum\limits_{i=-r}^{+\infty}a_ip^i\in\mathbb{Q}_p$, with $a_i \in \{ -\frac{p-1}{2}, \ldots, \frac{p-1}{2} \}$, Browkin defined  two floor functions as
\[s(\alpha)=\sum\limits_{i=-r}^{0}a_ip^i, \ \ \ t(\alpha)=\sum\limits_{i=-r}^{-1}a_ip^i, \]
with $r \in \mathbb N$, where $s(\alpha)=0$ for $r<0$ and $t(\alpha)=0$ for $r\leq 0$.
Given $\alpha_0 \in \mathbb Q_p$, the first algorithm defined by Browkin (see \cite{BI}), which we will call \textit{Browkin I}, evaluates the $p$--adic continued fractions expansion $[b_0, b_1, \ldots]$ of $\alpha_0$ by 
\begin{align} 
\begin{cases}\label{Br1}
b_n=s(\alpha_n)\\
\alpha_{n+1}=\frac{1}{\alpha_n-b_n},
\end{cases}
\end{align}
for all $n\geq 0$.
In \cite{BII}, Browkin proposed a new algorithm, which we will call \textit{Browkin II}, where the $p$--adic continued fraction $[b_0, b_1, \ldots]$ of $\alpha_0 \in \mathbb Q_p$ is evaluated by

\begin{align} 
\begin{cases}\label{Br2}
b_n=s(\alpha_n) \ \ \ \ \ & \textup{if} \ n \ \textup{even}\\
b_n=t(\alpha_n) & \textup{if} \ n \ \textup{odd}\ \textup{and} \ v_p(\alpha_n-t(\alpha_n))= 0\\
b_n=t(\alpha_n)-sign(t(\alpha_n)) & \textup{if} \ n \ \textup{odd} \ \textup{and} \ v_p(\alpha_n-t(\alpha_n))\neq 0\\
\alpha_{n+1}=\frac{1}{\alpha_n-b_n}.
\end{cases}
\end{align}
for all $n\geq 0$.
In both algorithms, the choice of the representatives $a_i$'s in the interval $\{-\frac{p-1}{2},\ldots,\frac{p-1}{2}\}$ is crucial. 
An analogue of the Lagrange's Theorem has not been proved nor disproved for these two algorithms and an effective characterization of the periodicity is still missing. For \textit{Browkin I}, Bedocchi \cite{BEI,BEII,BEIII} characterized the purely periodic expansions. Furthermore, he gave some results about the possible lengths of the periods and pre-periods. Some similar results have been recently proved for \textit{Browkin II}, see \cite{MRSI}. In \cite{CMT}, the authors proved that there are infinitely many square roots of integers having a periodic \textit{Browkin I} continued fraction with period of length $2^k$, for each $k\geq 1$. Moreover, they gave some criteria for the periodicity of this algorithm, although they are not effective. Further results about the periodicity and the algebraic properties of $p$--adic continued fractions can be found in \cite{DEA,OO,WANGI,WANGII,DEWI,DEWII}.
\textit{Browkin II} appears to provide more periodic expansions for quadratic irrationals than \textit{Browkin I}, as experimentally observed by Browkin \cite{BII}. Hence, it is interesting to deepen the study of this algorithm in order to improve it furthermore. 
In \cite{BCMII}, an algorithm very similar to \emph{Browkin II} has been considered, using the canonical representatives in $\{0,\ldots,p-1\}$ instead of $\{-\frac{p-1}{2},\ldots,\frac{p-1}{2}\}$. However, experimentally, it did not seem to improve \textit{Browkin II} in terms of periodic expansions. 

In the first part of the paper we continue the analysis of the periodicity of \textit{Browkin II}. The presence of the sign function in \eqref{Br2} makes the study of the periodicity partial and unsatisfactory, in a sense that we explain in Section \ref{sec: bad}. In particular, there is not a characterization for the pure periodicity, in contrast with Galois' Theorem in $\mathbb{R}$ and the results of \cite{BEI} for \textit{Browkin I}. Moreover, the length of pre-periods of periodic \textit{Browkin II} continued fractions for square roots of integers can  assume several different values. On the contrary, in $\mathbb{R}$ it is always $1$ and for \textit{Browkin I} it is either $2$ or $3$ (see \cite{BEI}). These results are crucial for the study of the periodicity and for the proof of Lagrange's Theorem. In this paper we propose a new algorithm, which is obtained as a slight modification of \textit{Browkin II}, where the sign function is not used. 
In \textit{Browkin II}, the sign function is used in order to fulfill the convergence condition proved in \cite{BII}. This condition has been generalized very recently in \cite{MRSII}. In this way, the omission of the sign does not compromise the $p$--adic convergence (more details are given in Section \ref{sec: new}). This small adjustment improves significantly the periodicity properties of \textit{Browkin II}. In fact, for the new algorithm an analogue of Galois' Theorem holds (see Section \ref{secPURE}). Moreover, the pre-period of periodic continued fractions for square roots of integers with zero valuation has length $1$ (see Section \ref{sub: prepe}). 
In Section \ref{sec: numerical}, we make several computations for studying the behaviour of the new algorithm, compared with \textit{Browkin I} and \textit{Browkin II}. In particular, we notice that \textit{Browkin I} is periodic very rarely compared with \textit{Browkin II} and the new algorithm. Moreover, it appears that the new algorithm is usually periodic on more quadratic irrationals than \textit{Browkin II} (in all the cases except when $p=3$) and they have a similar behaviour for large values of the prime $p$ (see Remark \ref{Rem: exp}). We also propose an analysis about the quality of the approximations provided by the three algorithms and by truncating the $p$--adic expansion of square roots.
 The computations and the theoretical results suggest that the use of the sign function in \textit{Browkin II} affects negatively its periodicity and it can be removed in \eqref{Br2} without, apparently, any negative effect. On the contrary, removing the use of the sign function allows to study more easily the algorithms and to get more useful properties.

\section{Some issues of Browkin's algorithm} \label{sec: bad}
In the following, $p$ is an odd prime and we denote by $v_p(\cdot)$ and $|\cdot|_p$, respectively, the $p$--adic valuation and the $p$--adic norm. We also call $J_p$ the set of all the possible values taken by the function $s$ and the set $K_p$ is the analogue of $J_p$ for the function $t$. In the following proposition, we summarize some well known facts.
\begin{Proposition} \label{prop:varie} \
\begin{enumerate}
    \item For all $a,b\in J_p$, with $a\neq b$, we have $v_p(a-b)\leq 0$, see \cite{BEI}.
    \item For all $a,b\in K_p$, with $a\neq b$, we have $v_p(a-b)< 0$, see \cite{MRSI}.
    \item Let $\alpha\in\mathbb{Q}_p$, then
\[|s(\alpha)|<\frac{p}{2} \quad \text{and} \quad |t(\alpha)|<\frac{1}{2}, \]
where $|\cdot|$ is the Euclidean norm, see \cite{BI, MRSI}.
\end{enumerate}
\end{Proposition}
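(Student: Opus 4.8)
The plan is to treat part (3) separately by a direct geometric-series estimate, and to derive parts (1) and (2) from a single observation about the $p$-adic valuation of a finite ``balanced'' $p$-adic sum.

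For part (3), I would simply bound each digit by $\frac{p-1}{2}$ and sum the resulting geometric series. Writing $s(\alpha)=\sum_{i=-r}^{0}a_ip^i$ gives
\[
|s(\alpha)|\le\frac{p-1}{2}\sum_{i=-r}^{0}p^i=\frac{p-1}{2}\cdot\frac{p-p^{-r}}{p-1}=\frac{p-p^{-r}}{2}<\frac p2,
\]
and the same computation applied to $t(\alpha)=\sum_{i=-r}^{-1}a_ip^i$ yields $|t(\alpha)|\le\frac{1-p^{-r}}{2}<\frac12$. The strictness of both inequalities comes from the fact that $p^{-r}>0$.

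For parts (1) and (2), the key point is that an element of $J_p$ or $K_p$ is a \emph{finite} sum $\sum_i c_ip^i$ whose coefficients lie in $\{-\frac{p-1}{2},\dots,\frac{p-1}{2}\}$, so its $p$-adic valuation equals the smallest index carrying a nonzero digit. Given distinct $a,b\in J_p$, I would write both over a common index range $\{-m,\dots,0\}$ (padding with zero digits) and let $k$ be the least index at which the digits differ, so that
\[
a-b=(a_k-b_k)p^k+\sum_{i>k}(a_i-b_i)p^i.
\]
Since $a_k-b_k$ is a nonzero integer with $|a_k-b_k|\le p-1$, it is not divisible by $p$, hence $v_p\big((a_k-b_k)p^k\big)=k$; every remaining term has valuation strictly larger than $k$, so the non-archimedean property forces $v_p(a-b)=k$. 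Because the indices for $J_p$ run up to $0$ we get $k\le 0$, giving (1); because the indices for $K_p$ run only up to $-1$ we get $k\le -1<0$, giving (2).

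The only delicate step is verifying that the lowest differing digit $a_k-b_k$ is a $p$-adic unit, and this is exactly where the digit bound $\frac{p-1}{2}$ is used: a difference of two digits lies in $\{-(p-1),\dots,p-1\}$, whose only multiple of $p$ is $0$, so $a_k\ne b_k$ already guarantees $p\nmid(a_k-b_k)$. Beyond this, one must apply the strict form of the ultrametric inequality correctly to isolate the single dominant term; there is no substantial obstacle, and the same argument incidentally shows that the balanced finite representation is unique, so $s$ and $t$ are well defined.
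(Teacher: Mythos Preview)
Your proof is correct. Note, however, that the paper does not actually give its own proof of this proposition: it merely collects the three statements with pointers to the literature (\cite{BEI}, \cite{MRSI}, \cite{BI}), so there is no in-paper argument to compare against. Your direct arguments---the geometric-series bound for part~(3) and the ``lowest differing digit'' valuation computation for parts~(1) and~(2)---are exactly the standard proofs one finds in those references, and every step you wrote is sound (in particular, your observation that $|a_k-b_k|\le p-1$ forces $p\nmid(a_k-b_k)$ is precisely the point where the balanced-digit hypothesis enters).
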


The first issue about \textit{Browkin II} arises when studying the pure periodicity.

Purely periodic continued fractions have always been of great interest and they are crucial in the proof of the Lagrange's Theorem for classical continued fractions. The characterization of purely periodic continued fractions in $\mathbb{R}$ is a famous result due to Galois.
For \textit{Browkin I}, Bedocchi proved the following theorem.

\begin{Theorem}[\cite{BEI}, Proposition 3.1, Proposition 3.2]
Let $\alpha\in\mathbb{Q}_p$ having a periodic \textit{Browkin I} continued fraction expansion. Then the expansion is purely periodic if and only if
\[|\alpha|_p> 1, \quad |\overline{\alpha}|_p<1.\]
\end{Theorem}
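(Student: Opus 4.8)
The plan is to mimic the classical proof of Galois' Theorem, replacing the order-theoretic inequalities by $p$--adic valuation inequalities and using the ultrametric property throughout. Write $\alpha=\alpha_0$, let $\overline{\alpha}$ denote its conjugate, and recall the two defining relations of \textit{Browkin I}, namely $b_n=s(\alpha_n)$ and $\alpha_n=b_n+\tfrac1{\alpha_{n+1}}$. Conjugating the latter (all $b_n$ are rational) gives $\overline{\alpha_n}=b_n+\tfrac1{\overline{\alpha_{n+1}}}$, so the conjugate tails obey the \emph{same} recurrence. Two elementary facts will be used repeatedly: first, for every $n\ge 1$ one has $|\alpha_n|_p>1$ automatically, since $\alpha_n=\tfrac1{\alpha_{n-1}-s(\alpha_{n-1})}$ and the fractional part $\alpha_{n-1}-s(\alpha_{n-1})$ has strictly positive valuation; second, whenever $|\alpha_n|_p>1$ the integer part $b_n=s(\alpha_n)$ satisfies $v_p(b_n)=v_p(\alpha_n)<0$, hence $|b_n|_p>1$.

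For the implication ``reduced $\Rightarrow$ purely periodic'', I would first check that the pair of conditions propagates along the algorithm. Assuming $|\alpha_n|_p>1$ and $|\overline{\alpha_n}|_p<1$, the ultrametric inequality gives $|\overline{\alpha_n}-b_n|_p=|b_n|_p>1$ because the two valuations differ, whence $|\overline{\alpha_{n+1}}|_p=|b_n|_p^{-1}<1$; combined with the automatic bound on the $\alpha$--side this shows that $|\alpha_n|_p>1$ and $|\overline{\alpha_n}|_p<1$ hold for \emph{all} $n\ge 0$. The crucial step is then a recovery lemma: $b_{n-1}=s\!\left(-\tfrac1{\overline{\alpha_n}}\right)$. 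Indeed $-\tfrac1{\overline{\alpha_n}}=b_{n-1}-\overline{\alpha_{n-1}}$, and since $b_{n-1}\in J_p$ consists only of powers $p^i$ with $i\le 0$ while $v_p(\overline{\alpha_{n-1}})>0$ contributes only powers with $i\ge 1$, the two do not interact and the part of non-positive order is exactly $b_{n-1}$. Now if the expansion is periodic with period $\ell$, pick the least $m$ with $\alpha_m=\alpha_{m+\ell}$ and suppose $m\ge 1$; the recovery lemma, applied to $\overline{\alpha_m}=\overline{\alpha_{m+\ell}}$, yields $b_{m-1}=b_{m+\ell-1}$, and then $\alpha_{m-1}=b_{m-1}+\tfrac1{\alpha_m}=b_{m+\ell-1}+\tfrac1{\alpha_{m+\ell}}=\alpha_{m+\ell-1}$, contradicting minimality. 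Hence $m=0$ and the expansion is purely periodic.

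For the converse ``purely periodic $\Rightarrow$ reduced'', suppose $\alpha_0=\alpha_\ell$ with $\ell\ge1$. Then $|\alpha_0|_p=|\alpha_\ell|_p>1$ is immediate from the automatic bound above. To obtain $|\overline{\alpha_0}|_p<1$ I would pass to the M\"obius transformation $H=h_{\ell-1}\circ\cdots\circ h_0$ with $h_n(x)=\tfrac1{x-b_n}$, for which $\alpha_0$ is a fixed point (this is just $\alpha_\ell=\alpha_0$ rewritten). As $H$ has rational coefficients, its second fixed point is precisely $\overline{\alpha_0}$. Each $h_n$ maps the ball $E=\{x:|x|_p<1\}$ into itself and is $p$--adically contracting there, because $|b_n|_p>1$ forces $|h_n(x)|_p=|b_n|_p^{-1}<1$ and $|h_n'(x)|_p=|x-b_n|_p^{-2}<1$ on $E$; hence $H$ is a contraction of the complete ball $E$ and has a unique fixed point in it. That fixed point cannot be $\alpha_0$, since $|\alpha_0|_p>1$, so it must be $\overline{\alpha_0}$, giving $|\overline{\alpha_0}|_p<1$. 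Equivalently, one may compute with the convergents $p_n,q_n$: using $v_p(b_i)<0$ one finds $v_p(q_{\ell-1})=\sum_{i=1}^{\ell-1}v_p(b_i)$ and $v_p(p_{\ell-2})=\sum_{i=0}^{\ell-2}v_p(b_i)$, and since $\overline{\alpha_0}\,\alpha_0=-p_{\ell-2}/q_{\ell-1}$ while $v_p(\alpha_0)=v_p(b_0)$, this yields $v_p(\overline{\alpha_0})=-v_p(b_{\ell-1})>0$.

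The routine propagation steps and the bound $|b_n|_p>1$ are straightforward once the ultrametric is used carefully. I expect the main obstacle to be the converse direction, i.e.\ establishing $|\overline{\alpha_0}|_p<1$: the condition on $\alpha$ itself is forced by a single step of the algorithm, but controlling the conjugate genuinely requires the global periodicity, captured here either by the contraction property of $H$ on the ball $E$ or by the valuation bookkeeping for the convergents. A secondary point to handle with care is the recovery lemma in the forward direction, where one must ensure that no carrying mixes the non-positive-order part $b_{n-1}\in J_p$ with the positive-order contribution of $\overline{\alpha_{n-1}}$; this is where the separation in valuation recorded in Proposition \ref{prop:varie} is implicitly used.
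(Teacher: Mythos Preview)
The paper does not supply its own proof of this statement; it is quoted from Bedocchi \cite{BEI} without argument, so there is no in-paper proof to compare against directly. Your proposal is correct as written.

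It is nonetheless worth comparing your argument to the proof the paper \emph{does} give for the analogous result about Algorithm~\eqref{new}, namely Theorem~\ref{pureR}. The propagation of the valuation conditions on the conjugate tails is the common backbone of both. Where they diverge is in the device used to push the start of the period back by one step: the paper computes $v_p(b_{h-1}-b_{h+k-1})=v_p(\overline{\alpha}_{h-1}-\overline{\alpha}_{h+k-1})$, bounds the right-hand side from below using the propagated conditions on the conjugates, and then invokes Proposition~\ref{prop:varie} to force $b_{h-1}=b_{h+k-1}$; you instead recover $b_{n-1}$ directly as $s(-1/\overline{\alpha_n})$, which is the literal $p$--adic transcription of Galois' reversal trick. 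Both work; yours is closer to the classical proof, the paper's is Bedocchi's original technique. For the converse direction, the paper (in Theorem~\ref{pureR}) simply cites the ``usual relation'' $|\overline{\alpha}|_p=1/|b_{k-1}|_p$ from \cite{BEI,MRSI}, which is exactly your valuation-bookkeeping identity $v_p(\overline{\alpha_0})=-v_p(b_{\ell-1})$ obtained from $\alpha_0\overline{\alpha_0}=-p_{\ell-2}/q_{\ell-1}$; your alternative contraction argument on the ball $p\mathbb{Z}_p$ is an independent and self-contained route to the same conclusion that the paper does not use.
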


\begin{Remark}
Let us recall that a periodic Browkin's continued fraction represents always an irrational $\alpha\in\mathbb{Q}_p$ quadratic over $\mathbb{Q}$, i.e., its minimal polynomial is an irreducible polynomial $f(x)\in\mathbb{Q}[x]$ of degree $2$. We denote by $\overline{\alpha}$ the conjugate of $\alpha$, i.e., the other root of $f(x)$ over $\mathbb{Q}$.
\end{Remark}

Very recently, a similar result was proved in \cite{MRSI} for \textit{Browkin II}. However, in this case the result is only partial.

\begin{Theorem}[\cite{MRSI}]
Let $\alpha\in\mathbb{Q}_p$ having a periodic \textit{Browkin II} continued fraction expansion. If the expansion is purely periodic, then
\[ |\alpha|_p=1, \ \ |\overline{\alpha}|_p<1.\]
Vice versa, if
\[ |\alpha|_p=1, \ \ |\overline{\alpha}|_p<1,\]
then the pre-period of the continued fraction expansion of $\alpha$ must have even length.
\end{Theorem}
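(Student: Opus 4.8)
The plan is to control everything through the $p$--adic valuations of $\alpha_n$ and of its conjugate $\overline{\alpha_n}$ along the orbit. First I would record the norm pattern forced by \eqref{Br2}. For even $n$, $b_n=s(\alpha_n)$ captures every digit of $\alpha_n$ up to $p^0$, so $v_p(\alpha_n-b_n)\geq 1$ and $|\alpha_{n+1}|_p\geq p>1$; for odd $n$, $b_n$ equals $t(\alpha_n)$ or $t(\alpha_n)-sign(t(\alpha_n))$, and each of the two odd branches of \eqref{Br2} is designed precisely so that $v_p(\alpha_n-b_n)=0$, whence $|\alpha_{n+1}|_p=1$ (this is exactly the role of the sign function). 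Thus for $n\geq 1$ one has $|\alpha_n|_p>1$ when $n$ is odd and $|\alpha_n|_p=1$ when $n$ is even, and correspondingly $v_p(b_n)<0$ for odd $n$ while $v_p(b_n)=0$ for even $n$ with $|\alpha_n|_p=1$. A first consequence is that the period length $L$ is even: taking any index $n\geq 1$ in the periodic part, the equality $|\alpha_n|_p=|\alpha_{n+L}|_p$ would fail if $n$ and $n+L$ had opposite parity.

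The key structural fact is that each $b_n\in\mathbb{Q}$, so conjugating $\alpha_n=b_n+1/\alpha_{n+1}$ gives the identical recurrence $\overline{\alpha_{n+1}}=1/(\overline{\alpha_n}-b_n)$. Writing $c_n=v_p(\overline{\alpha_n})$, this reads $c_{n+1}=-v_p(\overline{\alpha_n}-b_n)$, and combining it with the values of $v_p(b_n)$ above shows that the configuration ``$c_n>0$ for even $n$ and $c_n=0$ for odd $n$'' is self--sustaining: an even index with $c_n>0$ gives $c_{n+1}=0$, and an odd index with $c_n=0$ (where $v_p(b_n)=-r<0$) gives $c_{n+1}=r>0$. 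For the forward implication, $|\alpha|_p=1$ is immediate from $|\alpha_0|_p=|\alpha_L|_p=1$ with $L$ even. To obtain $|\overline{\alpha}|_p<1$ I would show the periodic orbit must lie in this good configuration: a two--step computation forces $c_{n+2}>0$ whenever the preceding even index has $c\leq 0$, so not every even index can carry $c\leq 0$; from an even index with $c>0$ the pattern then propagates to all larger indices, and choosing a multiple $mL$ of the (even) period beyond that index and using $\overline{\alpha_0}=\overline{\alpha_{mL}}$ yields $c_0>0$, i.e. $|\overline{\alpha}|_p<1$.

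For the converse I would start from $|\alpha_0|_p=1$ and $c_0>0$ and propagate the good configuration forward from $n=0$, so that $c_n>0$ at every even index and $c_n=0$ at every odd index. Let $m$ be the minimal pre--period length and suppose, for contradiction, that $m$ is odd. Then $m-1$ and $m+L-1$ are both even, so $b_{m-1}=s(\alpha_{m-1})$ and $b_{m+L-1}=s(\alpha_{m+L-1})$, and $\alpha_m=\alpha_{m+L}$ gives $\delta:=\alpha_{m-1}-\alpha_{m+L-1}=s(\alpha_{m-1})-s(\alpha_{m+L-1})\in J_p-J_p$. Conjugating, $\delta=\overline{\alpha_{m-1}}-\overline{\alpha_{m+L-1}}$, which has $v_p(\delta)>0$ since both conjugates sit at even indices. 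If $\delta\neq 0$ this contradicts Proposition \ref{prop:varie}(1), which forces $v_p(\delta)\leq 0$; hence $\delta=0$, so $\alpha_{m-1}=\alpha_{m+L-1}$, contradicting minimality of $m$. Therefore $m$ is even.

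The main obstacle I anticipate is the forward assertion that the orbit cannot sit off the good configuration. The delicate case is an even index with $c_n=0$, where $\overline{\alpha_n}$ and $s(\alpha_n)$ may partially cancel and $c_{n+1}$ is not read off from valuations alone; the resolution is to advance two steps and exploit that the intervening odd step has $v_p(b_n)<0$, which re--synchronises the valuation, after which periodicity forces the good pattern everywhere, in particular at the index $0$. The remaining points — verifying that all three branches of \eqref{Br2} give $v_p(\alpha_n-b_n)=0$ at odd steps and handling the base index $n=0$ separately — are routine but essential bookkeeping.
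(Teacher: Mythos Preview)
The paper does not itself prove this theorem; it is quoted from \cite{MRSI}. The closest thing in the paper is the proof of Theorem \ref{pureR} (the analogue for Algorithm \eqref{new}), and that is the natural comparison point.

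Your converse argument is correct and matches the paper's method for Theorem \ref{pureR}: propagate the valuations $c_n=v_p(\overline{\alpha_n})$ from the hypothesis $c_0>0$, show that at even indices $c_n>0$ and at odd indices $c_n=0$, and then use $\alpha_m=\alpha_{m+L}$ together with Proposition \ref{prop:varie}(1) to force $b_{m-1}=b_{m+L-1}$ and hence $\alpha_{m-1}=\alpha_{m+L-1}$, contradicting minimality of an odd $m$.

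The forward direction, however, has a real gap. Your claim that ``a two--step computation forces $c_{n+2}>0$ whenever the preceding even index has $c_n\leq 0$'' fails in the sub-case $c_n=0$: then $c_{n+1}=-v_p(\overline{\alpha_n}-s(\alpha_n))\leq 0$ is unspecified, and if it happens that $c_{n+1}=v_p(b_{n+1})$ at the odd step, a \emph{second} cancellation occurs and $c_{n+2}$ is again undetermined. Your ``resolution'' paragraph anticipates the first cancellation but not this second one, and nothing you wrote prevents the configuration $c_n=0$ (even $n$), $c_n=v_p(b_n)$ (odd $n$) from persisting along the whole periodic orbit. (It can in fact be excluded, but that needs an extra global argument --- for instance tracking $v_p(\alpha_n-\overline{\alpha_n})$ around the period and using that $\sum_{n=1}^{L} v_p(\alpha_n)<0$ --- which is not the two-step local computation you describe.)

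The paper, for Theorem \ref{pureR}, bypasses all of this: for a purely periodic expansion of (even) period $k$ it invokes the standard relation $|\overline{\alpha}|_p=1/|b_{k-1}|_p$ (citing \cite{BEI} and \cite{MRSI}), and since $k-1$ is odd one has $|b_{k-1}|_p>1$, hence $|\overline{\alpha}|_p<1$ in one line. The same relation works verbatim for \textit{Browkin II} and is the intended argument; replacing your valuation-chase on the conjugate orbit by this identity removes the gap.
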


As already pointed out in \cite{MRSI}, the result can not be strengthened. For example, the $7$--adic expansion of $\alpha=\sqrt{30}+3$ is
\[ \alpha=\left[-1, \frac{3}{7}, 3, \frac{2}{7},\overline{1, \frac{2}{7}, -2, \frac{3}{7}, 1, \frac{2}{7}, 2, \frac{1}{7}, -1, -\frac{5}{7}}\right],\]
that is not purely periodic although $|\alpha|_p=1$ and $|\overline{\alpha}|_p<1$. The problem in the proof arises from the presence of the sign function in \textit{Browkin II}, hence there is not a straightforward characterization as in the other cases.

The second issue of \textit{Browkin II} regards the possible pre-period lengths for periodic continued fractions of the square roots of integers. A known result for continued fraction in $\mathbb{R}$ states that, if $D\in\mathbb{N}$ is a non-square integer, then, for some integers $b_0,\ldots,b_{k-1}$, we have
\[\sqrt{D}=\left[b_0,\overline{b_1,\ldots,b_{k-1},2b_0}\right].\]
Hence, in particular, every square root of integer has a periodic continued fraction of pre-period $1$. For \textit{Browkin I}, Bedocchi proved the following result.
\begin{Theorem}[\cite{BEI}, Proposition 3.3]
Let $D\in\mathbb{Z}$ such that $\sqrt{D}\in\mathbb{Q}_p$. If the \textit{Browkin I} expansion of $\sqrt{D}$ is periodic, then the pre-period is
\begin{align*} 
\begin{cases}
2 \ \ \ \ \ & D \not\equiv 4 \mod 8 \ \text{when} \ p=2 \\
3 \ \ \ \ \ & otherwise.
\end{cases}
\end{align*}
\end{Theorem}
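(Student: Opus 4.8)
The plan is to reduce the determination of the pre-period to a finite computation via the Galois-type characterization of pure periodicity stated above: for any tail $\alpha_n$ of the expansion, $\alpha_n$ is purely periodic if and only if $|\alpha_n|_p>1$ and $|\overline{\alpha_n}|_p<1$. Since by hypothesis the expansion of $\sqrt D$ is periodic, each tail $\alpha_k$ is again a quadratic irrational with periodic expansion, and the pre-period is exactly the least index $k$ for which $\alpha_k$ is purely periodic (a tail failing the condition is not purely periodic, hence never recurs, so $k$ is minimal). Thus the whole argument reduces to tracking the $p$-adic norms of $\alpha_n$ and of its conjugate $\overline{\alpha_n}$ for small $n$, using that $b_n=s(\alpha_n)\in\mathbb Q$ is fixed by conjugation, so that $\overline{\alpha_{n+1}}=\frac{1}{\overline{\alpha_n}-b_n}$ runs in parallel with $\alpha_{n+1}=\frac{1}{\alpha_n-b_n}$.

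First I would dispose of the condition on $\alpha_n$ itself. By definition of $s$ one has $v_p(\alpha_n-s(\alpha_n))\geq 1$, hence $v_p(\alpha_{n+1})\leq -1$, so $|\alpha_n|_p>1$ for every $n\geq 1$; on the other hand $v_p(\sqrt D)\geq 0$ gives $|\alpha_0|_p\leq 1$. Therefore $k\geq 1$ and, for $n\geq 1$, the Galois condition collapses to the single requirement $|\overline{\alpha_n}|_p<1$. The core of the proof is then to locate the first $n$ at which $|\overline{\alpha_n}|_p$ drops below $1$.

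In the generic situation I would compute directly. Since $b_0=s(\sqrt D)\equiv\sqrt D\pmod p$, we have $\overline{\alpha_1}=\frac{1}{-\sqrt D-b_0}$ with $\sqrt D+b_0\equiv 2\sqrt D\pmod p$; when $p$ is odd and $p\nmid D$ this is a unit, so $v_p(\overline{\alpha_1})=0$, $|\overline{\alpha_1}|_p=1$, and $k\neq 1$. At the next step $v_p(b_1)=v_p(\alpha_1)<0$ while $v_p(\overline{\alpha_1})=0$, so these valuations are distinct and $v_p(\overline{\alpha_1}-b_1)=v_p(b_1)<0$; hence $v_p(\overline{\alpha_2})>0$ and $|\overline{\alpha_2}|_p<1$. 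This gives $k=2$ and pre-period $2$, the value recorded in the first case of the statement.

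The remaining, harder, part is the exceptional case, where this clean separation of valuations breaks down, namely when $\sqrt D+b_0$ is not a $p$-adic unit. This is the phenomenon forced for $p=2$, where $\sqrt D+b_0\equiv 2\sqrt D\equiv 0\pmod 2$ always, and sharply so for $D\equiv 4\pmod 8$: there $v_2(D)=2$ forces $b_0=0$ and $v_2(\overline{\alpha_1})<0$. Now both $\overline{\alpha_1}$ and $b_1$ have negative valuation, so their difference can cancel and one cannot immediately conclude $|\overline{\alpha_2}|_p<1$. Here I would compute the leading $p$-adic digits of $\sqrt D$ explicitly from the congruence $D\equiv 4\pmod 8$ and show that $v_p(\overline{\alpha_1})=v_p(b_1)$ with cancelling leading terms, so that $|\overline{\alpha_2}|_p\geq 1$ and the first drop below $1$ is postponed to $\overline{\alpha_3}$, yielding pre-period $3$. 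The main obstacle is precisely this cancellation analysis: pinning down $v_p(\overline{\alpha_1}-b_1)$ requires controlling two competing terms of equal valuation, and it is the residue of $D$ modulo $8$ that decides whether the drop occurs already at the second step or only at the third.
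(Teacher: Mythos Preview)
The paper does not prove this statement: it is quoted from Bedocchi \cite{BEI} as background, with no argument supplied. So there is no proof in the paper to compare your proposal against.

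Your strategy is nonetheless the natural one, and is presumably close to Bedocchi's: reduce to the Galois-type characterization of pure periodicity for \textit{Browkin I} (also due to Bedocchi and quoted just above in the paper), then track $|\alpha_n|_p$ and $|\overline{\alpha_n}|_p$ step by step using $\overline{\alpha_{n+1}}=1/(\overline{\alpha_n}-b_n)$. Your computation for odd $p$ with $p\nmid D$, yielding pre-period $2$, is correct; this is also consistent with the paper's later remark in Section~\ref{Sec: preperiods} that the \textit{Browkin I} pre-period is $2$.

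There are two gaps. First, for odd $p$ you treat only $p\nmid D$. When $p^2\mid D$ one has $b_0=0$, so $\overline{\alpha_1}=-1/\sqrt D=-\alpha_1$ has \emph{negative} valuation, and your sentence ``$v_p(\overline{\alpha_1})=0$'' no longer applies; a separate (easy) check via $\overline{\alpha_1}-b_1=-(\alpha_1+s(\alpha_1))=-(2s(\alpha_1)+(\alpha_1-s(\alpha_1)))$ is needed to recover $v_p(\overline{\alpha_2})>0$ and pre-period $2$. Second, the $p=2$ analysis is left as an intention rather than carried out: you correctly locate the obstruction as a possible cancellation in $\overline{\alpha_1}-b_1$ when both terms share the same negative $2$-adic valuation, but you do not perform the digit computation that separates $D\equiv 4\pmod 8$ (pre-period $3$) from the remaining residues (pre-period $2$). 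In particular, for $D$ odd (so $D\equiv 1\pmod 8$) one still has $v_2(\sqrt D+b_0)>0$ and the ``generic'' argument breaks there too; all $2$-adic cases require the explicit computation you allude to but do not give.
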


The situation is more complicated for the case of \textit{Browkin II}, where several different periods have been observed for $\sqrt{D}$. In \cite{MRSI}, the following result has been proved, showing that in this case the pre-period can not be an odd integer greater than $1$.

\begin{Proposition}[\cite{MRSI}]\label{prepMRS}
Let $\sqrt{D}$ be defined in $\mathbb{Q}_p$, with $D\in\mathbb{Z}$ not a square; if $\sqrt{D}$ has  a periodic continued fraction expansion with \textit{Browkin II}, then the pre-period has length either $1$ or even.
\end{Proposition}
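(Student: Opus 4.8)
The plan is to reduce the statement to the \emph{vice versa} direction of the \textit{Browkin II} pure--periodicity theorem of \cite{MRSI} quoted above, applied not to $\sqrt D$ itself but to the complete quotient $\alpha_2$. Throughout write $\alpha_0=\sqrt D$, let $\alpha_{n+1}=1/(\alpha_n-b_n)$ be the complete quotients produced by \eqref{Br2}, and recall that since conjugation over $\mathbb Q$ fixes each $b_n\in\mathbb Q$ it commutes with the recurrence, so $\overline{\alpha_{n+1}}=1/(\overline{\alpha_n}-b_n)$. The starting observation is that $\overline{\alpha_0}=-\sqrt D$, whence $|\alpha_0|_p=|\overline{\alpha_0}|_p$; in particular $\alpha_0$ can never satisfy $|\alpha_0|_p=1$ together with $|\overline{\alpha_0}|_p<1$, so by the first part of the \cite{MRSI} theorem $\sqrt D$ is not purely periodic and its pre-period $m$ satisfies $m\geq 1$.

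First I would record the valuation pattern forced by the two floor functions. For $n$ even one has $b_n=s(\alpha_n)$, so $\alpha_n-b_n$ retains only the terms of valuation $\geq 1$ and hence $v_p(\alpha_n-b_n)>0$, giving $|\alpha_{n+1}|_p>1$; for $n$ odd both branches of \eqref{Br2} subtract off exactly the polar part of $\alpha_n$ (in the second branch adding a unit $\pm 1$), so $v_p(\alpha_n-b_n)=0$ and $|\alpha_{n+1}|_p=1$. In particular $|\alpha_1|_p>1$ while $|\alpha_2|_p=1$, which is why $\alpha_1$ cannot meet the hypothesis $|\cdot|_p=1$ of the \cite{MRSI} theorem but $\alpha_2$ has a chance to.

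The technical core is to check that $\alpha_2$ does satisfy $|\alpha_2|_p=1$ and $|\overline{\alpha_2}|_p<1$. The first equality is already in hand. For the conjugate I would compute $\overline{\alpha_2}=1/(\overline{\alpha_1}-b_1)$ and show $v_p(\overline{\alpha_1}-b_1)<0$, i.e. $|\overline{\alpha_2}|_p<1$. Since $b_1$ differs from $t(\alpha_1)$ by at most a unit and $v_p(t(\alpha_1))=v_p(\alpha_1)\leq -1$, the term $b_1$ has strictly negative valuation; using $\overline{\alpha_1}=1/(-\sqrt D-b_0)$ and $\alpha_1=1/(\sqrt D-b_0)$ one finds that $\overline{\alpha_1}-\alpha_1=\tfrac{-2\sqrt D}{D-b_0^2}$ is an explicit multiple of $\sqrt D$ whose valuation is controlled, and this forces the leading (polar) term of $\overline{\alpha_1}-b_1$ not to cancel, so that $v_p(\overline{\alpha_1}-b_1)=v_p(b_1)<0$. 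I expect this cancellation check to be the main obstacle: one must split according to whether $v_p(D)=0$ or $v_p(D)>0$ (the two cases that make $\sqrt D\in\mathbb Q_p$) and invoke Proposition \ref{prop:varie} to guarantee that the polar parts are genuinely of negative valuation and do not annihilate.

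Once $\alpha_2$ is known to satisfy $|\alpha_2|_p=1$ and $|\overline{\alpha_2}|_p<1$, the \emph{vice versa} direction of the \cite{MRSI} theorem applies verbatim to $\alpha_2$ and yields that the pre-period of the expansion $[b_2,b_3,\dots]$ of $\alpha_2$ is even. Finally I would translate this back: the expansion of $\alpha_2$ is the shift by two of that of $\sqrt D$, so its pre-period equals $\max(m-2,0)$. If $m\geq 3$ then $m-2$ is even and hence $m$ is even; if instead $m-2\leq 0$, then since $m\geq 1$ we are left with $m\in\{1,2\}$. In all cases the pre-period $m$ is either $1$ or even, which is the assertion.
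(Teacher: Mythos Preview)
The paper does not prove this proposition; it is quoted from \cite{MRSI} with no argument given here, so there is no in-paper proof to compare against.

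That said, your strategy---pass to $\alpha_2$, verify $|\alpha_2|_p=1$ and $|\overline{\alpha_2}|_p<1$, then invoke the vice-versa half of the \cite{MRSI} pure-periodicity theorem---is sound and is almost certainly the route taken in \cite{MRSI} itself. Your ``technical core'' can be handled more directly than your sketch suggests, with no delicate cancellation. When $v_p(D)=0$ one has $v_p(\overline{\alpha_0}-b_0)=v_p(-2b_0)=0$ (using $p$ odd and $v_p(b_0)=0$), hence $v_p(\overline{\alpha_1})=0>v_p(b_1)$ and therefore $v_p(\overline{\alpha_1}-b_1)=v_p(b_1)<0$. When $v_p(D)>0$ one has $b_0=0$ and $\overline{\alpha_1}=-\alpha_1$, so $\overline{\alpha_1}-b_1=-(\alpha_1-b_1)-2b_1$ has valuation $v_p(b_1)<0$ because $v_p(\alpha_1-b_1)=0$ by the design of \textit{Browkin II}. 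The detour through $\overline{\alpha_1}-\alpha_1=-2\sqrt D/(D-b_0^2)$ is unnecessary.

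For context, the paper's own proof of the analogous statement for Algorithm \eqref{new} (Proposition \ref{newprep}) takes a slightly different tack: it introduces the auxiliary algorithm \eqref{newstar} with $s$ and $t$ swapped, proves a pure-periodicity criterion for it (Theorem \ref{pureRstar}), and applies that to $\alpha_1$ rather than $\alpha_2$. That yields the sharper conclusion ``pre-period exactly $1$'' because the new algorithm has no sign-function irregularity. For \textit{Browkin II} one cannot work with $\alpha_1$ (since $|\alpha_1|_p>1$), is forced to $\alpha_2$, and must accept the weaker ``$1$ or even'' outcome---exactly as you do.
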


The problems that we have highlighted for \textit{Browkin II} are due to the (unforeseeable) use of the sign function. In fact, the properties of periodicity of \textit{Browkin II} strongly depend on the application of the sign function during the algorithm. 

\begin{Definition}\label{signBR}

Given $\alpha = \sum\limits_{n=-r}^{+\infty} a_np^n \in \mathbb Q_p$, let us define the function $B:\mathbb{Q}_p\rightarrow \{-1,0,+1\}$ as follows:
\begin{align} \label{new-alg}
B(\alpha):=
\begin{cases} 
-1 &\textup{if}  \ a_0=0 \ \text{and} \ sign(t(\alpha))=-1 \\
0 \ \  &\textup{if}  \ a_0\neq 0\\
+1 &\textup{if}  \ a_0=0 \ \text{and} \ sign(t(\alpha))=+1.
\end{cases}
\end{align}
\end{Definition}

Using this definition, we can rewrite Algorithm \eqref{Br2} as
\begin{align*} 
\begin{cases}\label{Br2star}
b_n=s(\alpha_n) \ \ \ \ \ & \textup{if} \ n \ \textup{even}\\
b_n=t(\alpha_n)-B(\alpha_n) & \textup{if} \ n \ \textup{odd}\\
\alpha_{n+1}=\frac{1}{\alpha_n-b_n},
\end{cases}
\end{align*}
for all $n\geq0$ and given $\alpha_0 \in \mathbb Q_p$.
Notice that we are interested on the value of $B(\alpha_n)$ for odd $n$, i.e., when we use the function $t$. The problem of determining the exact behaviour of $B(\alpha_n)$ seems to be hard in general but it is crucial for the study of the periodicity of \textit{Browkin II}.

In the next theorem, we prove a necessary and sufficient condition to decide whether or not the sign function is going to be used at the $(k+1)$-th step only looking at the coefficients of the $p$--adic expansion of the $k$-th complete quotient. The effectiveness of this result is that it does not require the explicit computation of $\alpha_{k+1}$. Before stating the theorem, we need the following definition.

\begin{Definition}
Given $\alpha = \sum\limits_{i=-r}^{+\infty}c_ip^i \in \mathbb Q_p$, we define the matrix
\[C_\alpha=\begin{pmatrix}
\ c_{n+1} & c_n & 0  & \ldots &   0 \ \\
\ c_{n+2} & c_{n+1} & c_n & \ddots  & \vdots  \  \\
\ \vdots & \vdots & \ddots  & \ddots &  0 \  \\
\ c_{2n-1} & c_{2n-2} & \ldots   & \ddots & c_n \ \\
\ c_{2n} & c_{2n-1} & \ldots & \ldots & c_{n+1} \ \\
\end{pmatrix},\]
where $n=v_p(\alpha-s(\alpha))$.
\end{Definition}

\begin{Theorem}
Given $\alpha_0\in\mathbb{Q}_p$, for all even $k\in\mathbb{N}$, $B(\alpha_{k+1})\neq 0$ if and only if $det(C_{\alpha_k})=0$, where $\alpha_k$ is the complete quotients obtained by Algorithm \eqref{Br2}.
\begin{proof}
Let $\alpha_k = \sum\limits_{i=-r'}^{+\infty}c_ip^i$ and $\alpha_{k+1} = \sum\limits_{i=-r}^{+\infty}a_ip^i$ be two consecutive complete quotients obtained by \eqref{Br2} with $k$ even.
By definition $B(\alpha_{k+1})\neq 0$ if and only if $a_0=0$.
We call $n=v_p(\alpha_k-s(\alpha_k))>0$, so that 
\[\alpha_k-s(\alpha_k)=c_np^n+c_{n+1}p^{n+1}+\ldots.\]
In this case the valuation of $\alpha_{k+1}$ is
\[v_p(\alpha_{k+1})=-v_p(\alpha_{k}-s(\alpha_{k}))=-n,\]
hence we can write it as
\[\alpha_{k+1}=a_{-n}\frac{1}{p^n}+a_{-(n-1)}\frac{1}{p^{n-1}}+\ldots+a_{-1}\frac{1}{p}+a_0+\ldots.\]
We want that $\alpha_{k+1}(\alpha_{k}-s(\alpha_{k}))=1$, that is,
\[c_na_{-n}+(c_{n+1}a_{-n}+c_{n}a_{-(n-1)})p+\ldots+(c_{2n}a_{-n}+\ldots+c_na_0)p^n+\ldots=1.\]
Hence, the coefficients $a_i$ are uniquely determined as solutions of the following system:
\begin{align*}
\begin{cases}
a_{-n}c_n=1\\
a_{-n}c_{n+1}+a_{-(n-1)}c_n=0\\
a_{-n}c_{n+2}+a_{-(n-1)}c_{n+1}+a_{-(n-2)}c_{n}=0\\
\ldots\\
\sum\limits_{k=0}^{n} a_{-n+k}c_{2n-k}=0.
\end{cases}
\end{align*}

If we call
\[C=\begin{pmatrix}
\ c_n &   0 & 0 & \ldots &  0 \  \\
\ c_{n+1} & c_n & 0  & \ldots & 0 \  \\
\ c_{n+2} & c_{n+1} & c_n & \ddots  & \vdots  \  \\
\ \vdots & \vdots & \ddots & \ddots & 0  \ \\
\ c_{2n} & c_{2n-1} & \ldots & c_{n+1} & c_{n} \ \\
\end{pmatrix}\]
the $(n+1)\times(n+1)$ matrix of the coefficients, then $a_0$ is
\[a_0=\frac{det\begin{pmatrix}
\ c_n &   0 & 0 & \ldots &  1 \  \\
\ c_{n+1} & c_n & 0  & \ldots & 0 \  \\
\ c_{n+2} & c_{n+1} & c_n & \ddots  & \vdots  \  \\
\ \vdots & \vdots & \ddots & \ddots & 0  \ \\
\ c_{2n} & c_{2n-1} & \ldots & c_{n+1} & 0 \ \\
\end{pmatrix}}{det(C)}.\]
In particular, $a_0=0$ if and only if the numerator is zero, that is $det(C_{\alpha_k})=0$.
\end{proof}
\end{Theorem}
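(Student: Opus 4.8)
The plan is to reduce the statement to a purely algebraic condition on the $p$-adic digits of $\alpha_k$. By Definition \ref{signBR}, for the odd index $k+1$ we have $B(\alpha_{k+1}) \neq 0$ exactly when the constant coefficient $a_0$ in the expansion $\alpha_{k+1} = \sum_{i \geq -r} a_i p^i$ equals $0$. So it suffices to prove that $a_0 = 0$ if and only if $\det(C_{\alpha_k}) = 0$, and the whole theorem becomes a computation of $a_0$ in terms of the digits of $\alpha_k$.

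First I would write $\alpha_{k+1}$ explicitly. Since $k$ is even, $b_k = s(\alpha_k)$, hence $\alpha_{k+1} = (\alpha_k - s(\alpha_k))^{-1}$. Setting $n = v_p(\alpha_k - s(\alpha_k))$ and expanding $\alpha_k - s(\alpha_k) = \sum_{i \geq n} c_i p^i$ with $c_n \neq 0$, the reciprocal has valuation $-n$, so $\alpha_{k+1} = \sum_{j \geq -n} a_j p^j$. Here $n > 0$ because $s$ absorbs every digit of non-positive index, so $c_n \neq 0$ is guaranteed. Imposing $\alpha_{k+1}(\alpha_k - s(\alpha_k)) = 1$ and matching the coefficients of $p^0, p^1, \dots, p^n$ yields a linear system in the $n+1$ unknowns $a_{-n}, \dots, a_{-1}, a_0$, whose coefficient matrix $C$ is lower triangular with constant diagonal $c_n$ and right-hand side $(1, 0, \dots, 0)^T$.

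Because $c_n \neq 0$, we have $\det(C) = c_n^{\,n+1} \neq 0$, so the system is uniquely solvable and Cramer's rule expresses $a_0$ as a ratio of determinants with denominator $\det(C)$. The decisive step is to expand the numerator --- the matrix $C$ with its last column replaced by $(1, 0, \dots, 0)^T$ --- along that last column. Its single nonzero entry produces, up to a sign $(-1)^n$, the minor obtained by deleting the first row and the last column of $C$, and this minor is exactly $C_{\alpha_k}$. Since $\det(C) \neq 0$, we conclude $a_0 = 0$ if and only if $\det(C_{\alpha_k}) = 0$. The only delicate points are the careful index bookkeeping identifying the cofactor with $C_{\alpha_k}$ as defined, and confirming $n > 0$ so that $c_n \neq 0$ and the denominator never vanishes; the remainder is the routine Cauchy product of two $p$-adic power series.
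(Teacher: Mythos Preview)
Your proposal is correct and follows essentially the same approach as the paper's proof: reduce to $a_0=0$, set up the triangular linear system from the Cauchy product $\alpha_{k+1}(\alpha_k-s(\alpha_k))=1$, and apply Cramer's rule. The paper simply asserts that the numerator determinant vanishes iff $\det(C_{\alpha_k})=0$, while you make this explicit via cofactor expansion along the last column; this is a welcome clarification but not a different route.
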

Although it is possible to predict the appearance of the sign function one step in advance, it seems difficult to generalize this construction to make a prediction at the generic step. Therefore, the use of the sign function makes incomplete the study of the periodicity.

For all the reasons highlighted in this section, for the rest of the paper we focus on a new algorithm that is very similar to \textit{Browkin II}, but its properties do not rely on the function $B$.

\section{The new algorithm}\label{sec: new}
The algorithm \textit{Browkin II} has been defined exploiting the use of the sign function in order to satisfy the hypothesis of the following lemma. 

\begin{Lemma}[\cite{BII}, Lemma 1]\label{ConvBr2}
Let $b_0,b_1,\ldots\in \mathbb{Z}\left[\frac{1}{p}\right]$ be an infinite sequence  such that
\begin{equation}
\begin{cases}
v_p(b_{2n})=0\\
v_p(b_{2n+1})<0,
\end{cases}
\end{equation}
for all $n > 0$.
Then the continued fraction $[b_0,b_1,\ldots]$ is convergent to a $p$--adic number.
\end{Lemma}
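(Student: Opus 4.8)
The plan is to run the standard convergents machinery in the ultrametric setting. Introduce the numerators and denominators $A_n,B_n$ defined by $A_n=b_nA_{n-1}+A_{n-2}$ and $B_n=b_nB_{n-1}+B_{n-2}$ with the usual initial data $A_{-1}=1$, $A_{-2}=0$, $B_{-1}=0$, $B_{-2}=1$, so that $A_n/B_n$ is the $n$-th convergent of $[b_0,b_1,\ldots]$. By definition the continued fraction converges in $\mathbb{Q}_p$ exactly when the sequence $(A_n/B_n)$ converges there, and since $\mathbb{Q}_p$ is complete with an ultrametric absolute value, it suffices to show that consecutive convergents become $p$-adically close, i.e. that $\left|A_n/B_n-A_{n-1}/B_{n-1}\right|_p\to 0$.

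Next I would invoke the determinant identity $A_nB_{n-1}-A_{n-1}B_n=(-1)^{n-1}$, which follows by an immediate induction from the recurrences. It yields $\left|A_n/B_n-A_{n-1}/B_{n-1}\right|_p=1/|B_nB_{n-1}|_p=p^{\,v_p(B_n)+v_p(B_{n-1})}$, so the whole problem collapses to proving $v_p(B_n)+v_p(B_{n-1})\to-\infty$; for this it is enough to establish $v_p(B_n)\to-\infty$.

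The heart of the argument is then a valuation computation exploiting the parity hypotheses $v_p(b_{2n})=0$ and $v_p(b_{2n+1})<0$. I would prove by induction the statement $v_p(B_{2n+1})<v_p(B_{2n})$, carrying along the byproduct $v_p(B_{2n+2})=v_p(B_{2n+1})$. The step is a direct use of the non-archimedean equality $v_p(x+y)=\min(v_p(x),v_p(y))$ valid when $v_p(x)\neq v_p(y)$: from $B_{2n+2}=b_{2n+2}B_{2n+1}+B_{2n}$, using $v_p(b_{2n+2})=0$ and the hypothesis $v_p(B_{2n+1})<v_p(B_{2n})$, one gets $v_p(B_{2n+2})=v_p(B_{2n+1})$; then from $B_{2n+3}=b_{2n+3}B_{2n+2}+B_{2n+1}$, using $v_p(b_{2n+3})<0$, the product term has valuation $v_p(b_{2n+3})+v_p(B_{2n+1})<v_p(B_{2n+1})$, whence $v_p(B_{2n+3})=v_p(b_{2n+3})+v_p(B_{2n+2})<v_p(B_{2n+2})$. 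Combining the two facts gives the recursion $v_p(B_{2n+2})=v_p(B_{2n})+v_p(b_{2n+1})$, and since each $v_p(b_{2n+1})\leq-1$ this forces $v_p(B_{2n})\to-\infty$; the odd-indexed denominators then follow from $v_p(B_{2n+1})=v_p(B_{2n+2})$.

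The main obstacle, and where I expect to spend the most care, is the base of the induction. Because the hypotheses constrain only $b_{2n},b_{2n+1}$ for $n>0$, the terms $b_0,b_1$ are free and the valuations of the first few $B_n$ must be computed directly; moreover the clean equalities above require the two valuations being compared to be genuinely distinct, so I must rule out an exact ultrametric cancellation spoiling the start of the pattern. Since convergence is a tail property, I would handle this by showing that within a bounded number of initial steps the alternating valuation pattern is forced to lock in — using precisely that even-indexed $b$'s have valuation $0$ while odd-indexed ones are strictly negative — after which the induction proceeds without obstruction and delivers $v_p(B_n)\to-\infty$, hence the desired convergence.
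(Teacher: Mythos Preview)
The paper does not supply its own proof of this lemma; it is quoted from Browkin \cite{BII} as background and used without argument. Your outline is the standard one (and is essentially what appears in \cite{BII}): track $v_p(B_n)$ through the recursion $B_n=b_nB_{n-1}+B_{n-2}$ using the strict ultrametric equality $v_p(x+y)=\min(v_p(x),v_p(y))$ when the valuations differ, show $v_p(B_n)\to-\infty$, and conclude via $A_nB_{n-1}-A_{n-1}B_n=(-1)^{n-1}$ that the convergents form a Cauchy sequence in $\mathbb{Q}_p$. The same valuation computation is in fact recorded later in the present paper (in the subsection on quality of approximation), where the authors note that $v_p(B_n)=v_p(b_0)+v_p(b_1)+\cdots+v_p(b_n)$ under these hypotheses, which is exactly the closed form your induction produces once it is running.

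Your identification of the base case as the only delicate point is correct, since the stated hypotheses leave $b_0,b_1$ unconstrained. Rather than arguing that the pattern ``locks in'' after a bounded number of steps---which requires some case analysis and must contend with the possibility that an early $B_n$ vanishes---the cleanest fix is the one you hint at: pass to the tail $[b_2,b_3,\ldots]$, whose own denominators $B'_n$ satisfy $v_p(B'_0)=0$ and $v_p(B'_1)=v_p(b_3)<0$, so the induction starts immediately; then recover the original limit as $b_0+\cfrac{1}{b_1+1/\beta}$ where $\beta=\lim A'_n/B'_n$.
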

In \textit{Browkin II}, the sign function is necessary in order to generate the even partial quotients always with zero valuations. Otherwise it might happen that
\[v_p(\alpha_n-t(\alpha_n))>0,\]
for some odd $n\in\mathbb{N}$, which implies
\[v_p(b_{n+1})=v_p(s(\alpha_{n+1}))<0,\]
with $n+1$ even, and the hypotheses of Lemma \ref{ConvBr2} are not satisfied.

However, in \cite{MRSII}, this convergence condition has been lightened, proving the following sufficient condition for the $p$--adic convergence of continued fractions.

\begin{Lemma}[\cite{MRSII}]
Let  $b_0,b_1,\ldots\in \mathbb{Z}\left[\frac{1}{p}\right]$ be an infinite sequence such that
\[v_p(b_{n}b_{n+1})<0,\]
for all $n > 0$.
Then the continued fraction $[b_0,b_1,\ldots]$ is convergent to a $p$--adic number.
\end{Lemma}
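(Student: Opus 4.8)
The plan is to work with the classical convergents $A_n/B_n := [b_0,\ldots,b_n]$, governed by the recurrences $A_n = b_n A_{n-1}+A_{n-2}$ and $B_n = b_n B_{n-1}+B_{n-2}$ with $B_{-1}=0$, $B_0=1$, and to exploit the determinant identity $A_n B_{n-1}-A_{n-1}B_n=(-1)^{n-1}$. Since $\mathbb{Q}_p$ is complete and non-archimedean, it suffices to show that the consecutive differences tend to $0$, and the identity gives
\[ \left| \frac{A_n}{B_n} - \frac{A_{n-1}}{B_{n-1}} \right|_p = \frac{1}{|B_n B_{n-1}|_p} = p^{\,v_p(B_n B_{n-1})}. \]
Thus everything reduces to proving $v_p(B_n B_{n-1}) \to -\infty$. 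First I would note that the hypothesis forces $b_n\neq 0$ for every $n\geq 1$, since otherwise $v_p(b_n b_{n+1})=+\infty$; hence each $v_p(b_n)$ is finite and the $B_n$ are nonzero.

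The heart of the argument is the exact computation $v_p(B_n)=\sum_{i=1}^{n} v_p(b_i)$ for all $n\geq 1$, which I would establish by induction together with the stronger statement that in the recurrence the first summand strictly dominates, i.e. $v_p(b_n B_{n-1}) < v_p(B_{n-2})$. The base case $n=1$ is immediate because $B_{-1}=0$, so $v_p(b_1 B_0)=v_p(b_1)<+\infty$. For the inductive step, strict dominance at level $n$ yields $v_p(B_n)=v_p(b_n B_{n-1})=v_p(b_n)+v_p(B_{n-1})$ with no ultrametric cancellation; then
\[ v_p(b_{n+1}B_n)=v_p(b_{n+1})+v_p(b_n)+v_p(B_{n-1}) < v_p(B_{n-1}), \]
where the strict inequality is precisely the hypothesis $v_p(b_{n+1}b_n)<0$. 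This reestablishes strict dominance at level $n+1$ and closes the induction.

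With $v_p(B_n)=\sum_{i=1}^{n} v_p(b_i)$ in hand, for every $n\geq 2$ we get $v_p(B_n B_{n-1})-v_p(B_{n-1}B_{n-2})=v_p(B_n)-v_p(B_{n-2})=v_p(b_n)+v_p(b_{n-1})\leq -1$, again by the hypothesis; hence $v_p(B_n B_{n-1})$ decreases by at least $1$ at each step and tends to $-\infty$, which is exactly what was needed. The only genuine obstacle is the potential cancellation in the ultrametric estimate for $v_p(B_n)$: were the two terms $b_n B_{n-1}$ and $B_{n-2}$ ever of equal valuation, I would lose both the exact formula and the control on the growth of $v_p(B_n)$. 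The condition $v_p(b_n b_{n+1})<0$ is precisely what propagates strict dominance from one step to the next, so no such cancellation can occur; this also clarifies why it subsumes Browkin's alternating condition $v_p(b_{2n})=0$, $v_p(b_{2n+1})<0$, which is merely one way of enforcing the same sign pattern on consecutive valuations.
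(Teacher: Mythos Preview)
Your argument is correct. The paper does not actually prove this lemma---it only quotes it from \cite{MRSII}---so there is no in-paper proof to compare against line by line. That said, the paper later invokes precisely the identity you establish, $v_p(B_n)=\sum_{i=1}^{n} v_p(b_i)$, in the section on quality of approximation, and your route---proving by induction that $v_p(b_n B_{n-1})<v_p(B_{n-2})$ so that no ultrametric cancellation can occur in the recursion, deducing that $v_p(B_nB_{n-1})$ drops by at least one at each step, and then using completeness together with the strong triangle inequality to pass from vanishing consecutive differences to a Cauchy sequence---is the standard argument and is certainly the one intended.
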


Therefore, it is sufficient to have one partial quotient with negative valuation for each two partial quotients, and the other one can have either negative or null valuation. Starting from these observations, we define the following algorithm. This algorithm is basically \textit{Browkin II} without the use of the sign function. In this way, we solve the issues on the periodicity that we have underlined in the previous section. Given $\alpha_0=\alpha$ it works as follows, for $n\geq 0$:
\begin{align} 
\begin{cases}\label{new}
b_n=s(\alpha_n) \ \ \ \ \ & \textup{if} \ n \ \textup{even}\\
b_n=t(\alpha_n) & \textup{if} \ n \ \textup{odd}\\
\alpha_{n+1}=\frac{1}{\alpha_n-b_n}.
\end{cases}
\end{align}

In the next section we see that this new algorithm does not present the issues of Browkin's second algorithm and, at the same time, does not lose its good properties in terms of finiteness and periodicity.

\subsection{Finiteness} \label{sub: fini}
In this section we prove that finite continued fractions produced by Algorithm \eqref{new} characterize rational numbers. The fact that every finite continued fraction represents a rational number is straightforward. In the following theorem we prove that the converse also holds. 
\begin{Theorem}
If $\alpha_0 \in \mathbb Q$, then Algorithm \eqref{new}  stops in a finite number of steps.
\begin{proof}
By the construction we have that, for all $n > 0$,
\begin{align*}
v_p(\alpha_{2n})& \leq 0,\\
v_p(\alpha_{2n+1})& <0,\\
v_p(\alpha_{2n+2})& \leq 0.
\end{align*}
Hence, the complete quotients have the form
\begin{align*}
\alpha_{2n}&=\frac{N_{2n}}{D_{2n}p^j}, & \ &\text{with} \ (N_{2n},D_{2n})=1, \ \ p\not| N_{2n}D_{2n},  \ \ j\geq 0,\\
\alpha_{2n+1}&=\frac{N_{2n+1}}{D_{2n+1}p^k}, & \ &\text{with} \ (N_{2n+1},D_{2n+1})=1, \ \ p\not| N_{2n+1}D_{2n+1},  \ \ k\geq 1,\\
\alpha_{2n+2}&=\frac{N_{2n+2}}{D_{2n+2}p^l}, & \ &\text{with} \ (N_{2n+2},D_{2n+2})=1, \ \ p\not| N_{2n+2}D_{2n+2},  \ \ l\geq 0.
\end{align*}
By Proposition \ref{prop:varie}, the partial quotients satisfy, for all $n\in\mathbb{N}$,
\[
|b_{2n}|=\left|\frac{c_{2n}}{p^j} \right|<\frac{p}{2},\ \ \ \ \ 
|b_{2n+1}|= \left|\frac{c_{2n+1}}{p^k} \right|<\frac{1}{2},\]
for some $j\geq 0$, $k\geq 1$ and $v_p(c_{2n})=v_p(c_{2n+1})=0$.
Using the formula $\alpha_{k+1}=\frac{1}{\alpha_k-b_k}$ for $k=2n,2n+1$, we obtain
\begin{align*}
N_{2n+1}(N_{2n}-c_{2n}D_{2n})&=p^{j+k}D_{2n+1}D_{2n},\\
N_{2n+2}(N_{2n+1}-c_{2n+1}D_{2n+1})&=p^{k+l}D_{2n+2}D_{2n+1}.
\end{align*}
Since $(N_n,pD_n)=1$ for all $n\in\mathbb{N}$, then
\[|N_{2n+2}|=|D_{2n+1}|, \ \ \ |N_{2n+1}|=|D_{2n}|, \]
and
\begin{align*}
|D_{2n+1}|p^{k+j}&=|N_{2n}-c_{2n}D_{2n}|,\\ |D_{2n+2}|p^{k+l}&=|N_{2n+1}-c_{2n+1}D_{2n+1}|.
\end{align*}
In fact, $|N_{2n+1}|$ divides $|p^{j+k}D_{2n+1}D_{2n}|$ but is coprime with $|p^{j+k}D_{2n+1}|$, so it must be equal to $|D_{2n}|$, and a similar argument is used for $|N_{2n+2}|$. Therefore,
\begin{align*}
|D_{2n+1}|&\leq\frac{|N_{2n}|+|c_{2n}||D_{2n}|}{p^{k+j}}= \frac{|N_{2n}|}{p^{k+j}}+\frac{|c_{2n}|}{p^j}\cdot\frac{1}{p^k}|D_{2n}| <\frac{|N_{2n}|}{p^{k+j}}+\frac{1}{2p^{k-1}}|D_{2n}|,\\
|D_{2n+2}|&\leq\frac{|N_{2n+1}|+|c_{2n+1}||D_{2n+1}|}{p^{k+l}}= \frac{|N_{2n+1}|}{p^{k+l}}+\frac{|c_{2n+1}|}{p^k}\cdot\frac{1}{p^l}|D_{2n+1}|<\\
&<\frac{|N_{2n+1}|}{p^{k+l}}+\frac{1}{2}\cdot\frac{1}{p^{l}}|D_{2n+1}|=\frac{|N_{2n+1}|}{p^{k+l}}+\frac{1}{2p^{l}}|D_{2n+1}|,
\end{align*}
so that, since $k\geq 1$ and $j,l\geq 0$,
\[|D_{2n+1}|<\frac{|N_{2n}|}{p}+\frac{|D_{2n}|}{2}, \ \ \ \ |D_{2n+2}|<\frac{|N_{2n+1}|}{p}+\frac{|D_{2n+1}|}{2}.\]

By using the above formulas we can write
 
\begin{align*}
|N_{2n+2}|+p|D_{2n+2}|&<|D_{2n+1}|+p\left(\frac{|N_{2n+1}|}{p}+\frac{|D_{2n+1}|}{2}\right)=\\
&=|N_{2n+1}|+\left(\frac{p}{2}+1\right) |D_{2n+1}|<\\
&<|D_{2n}|+\left(\frac{p}{2}+1\right) \left(\frac{|N_{2n}|}{p}+\frac{|D_{2n}|}{2}\right)=\\
&=\left(\frac{1}{p}+\frac{1}{2}\right)|N_{2n}|+\left(\frac{p}{4}+\frac{1}{2}+1 \right)|D_{2n}|.
\end{align*}

The coefficient of $|N_{2n}|$ is clearly less than or equal $1$ and the coefficient of $|D_{2n}|$ is less than or equal $p$ if and only if $p\geq\frac{p+6}{4}$, that is, if and only if $p\geq 2$. We can conclude that, for all $n\in\mathbb{N}$,
\[|N_{2n+2}|+p|D_{2n+2}|<|N_{2n}|+p|D_{2n}|.\]
The sequence $\{|N_{2n}|+p|D_{2n}|\}_{n\in\mathbb{N}}$ is then a strictly decreasing sequence of natural numbers and, hence, it is finite. Therefore $\alpha$ has a finite continued fraction and the thesis follows.    
\end{proof}
\end{Theorem}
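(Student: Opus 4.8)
The plan is to force termination by exhibiting a strictly decreasing $\mathbb{N}$-valued monovariant attached to the even-indexed complete quotients. First I would record that rationality propagates through the algorithm: each partial quotient $b_n$ is a finite truncation of a $p$--adic expansion, hence lies in $\mathbb{Z}\left[\frac{1}{p}\right]\subset\mathbb{Q}$, so if $\alpha_0\in\mathbb{Q}$ then by induction every complete quotient $\alpha_{n+1}=\frac{1}{\alpha_n-b_n}$ is again rational (the only way the recurrence fails to produce $\alpha_{n+1}$ is $\alpha_n=b_n$, which is exactly the finiteness we are after). This reduces the statement to a descent on rational complete quotients.

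Next I would pin down the valuations dictated by the two floor functions. Since $s(\alpha)$ collects precisely the terms of nonpositive index, $\alpha_{2n}-s(\alpha_{2n})$ has strictly positive valuation and hence $v_p(\alpha_{2n+1})<0$; since $t(\alpha)$ collects only the terms of negative index, $\alpha_{2n+1}-t(\alpha_{2n+1})$ has nonnegative valuation and hence $v_p(\alpha_{2n+2})\leq 0$. With these signs in hand I would write each complete quotient in lowest terms as $\alpha_m=\frac{N_m}{D_m p^{e}}$ with $(N_m,D_m)=1$ and $p\nmid N_m D_m$, the exponent $e$ being $\geq 1$ at odd indices and $\geq 0$ at even indices, and similarly split each partial quotient as $b_m=\frac{c_m}{p^{e}}$ with $v_p(c_m)=0$.

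The heart of the argument is to transport these data through the recurrence. Clearing denominators in $\alpha_{k+1}(\alpha_k-b_k)=1$ for $k=2n,2n+1$ and exploiting coprimality with $p$ and with the denominators, I would deduce the size identities $|N_{2n+1}|=|D_{2n}|$ and $|N_{2n+2}|=|D_{2n+1}|$, together with $|D_{2n+1}|p^{k+j}=|N_{2n}-c_{2n}D_{2n}|$ and its odd-step analogue. Bounding the numerators by Proposition \ref{prop:varie}(3), namely $|b_{2n}|<\frac{p}{2}$ and $|b_{2n+1}|<\frac{1}{2}$, and using $k\geq 1$, $j,l\geq 0$, these yield the one-step contractions $|D_{2n+1}|<\frac{|N_{2n}|}{p}+\frac{|D_{2n}|}{2}$ and $|D_{2n+2}|<\frac{|N_{2n+1}|}{p}+\frac{|D_{2n+1}|}{2}$.

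Finally I would chain the two one-step estimates across a full even-to-even block to bound $|N_{2n+2}|+p|D_{2n+2}|$ by a linear combination of $|N_{2n}|$ and $|D_{2n}|$, and verify that both coefficients are admissible: the coefficient of $|N_{2n}|$ is at most $1$, while the coefficient of $|D_{2n}|$ is at most $p$, the latter amounting to the elementary inequality $p\geq\frac{p+6}{4}$, valid for every prime. Consequently $|N_{2n}|+p|D_{2n}|$ is a strictly decreasing sequence of positive integers, which cannot persist, so the algorithm halts. The main obstacle is selecting the right weighted monovariant: the asymmetry between the bounds $\frac{p}{2}$ for $s$ and $\frac{1}{2}$ for $t$ is exactly what makes the weight $p$ (rather than $1$) close the two-step contraction uniformly in the prime, and getting the bookkeeping of the $p$--powers $j,k,l$ correct is where the care is required.
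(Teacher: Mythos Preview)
Your proposal is correct and follows essentially the same route as the paper: the same reduced-form bookkeeping $\alpha_m=\frac{N_m}{D_m p^e}$, the same coprimality argument giving $|N_{2n+1}|=|D_{2n}|$ and $|N_{2n+2}|=|D_{2n+1}|$, the same one-step contractions via Proposition~\ref{prop:varie}(3), and the same weighted monovariant $|N_{2n}|+p|D_{2n}|$ with the identical coefficient check $p\geq\frac{p+6}{4}$. Your additional opening remark that rationality propagates along the recurrence is a welcome clarification that the paper leaves implicit.
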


\begin{Example}
Let us consider the continued fraction of $a=-\frac{17}{29}$ and $b=-\frac{15}{109}$ in $\mathbb{Q}_{23}$. For $a$, the $23$--adic expansion is
\[a=\left[1,-\frac{3}{23},-2 \right],\]
that is equal for both algorithms. Instead, the expansion of $b$ is different, because with \textit{Browkin II} it is
\[b=\left[-9,\frac{13}{23},-1,\frac{4}{23},-1 \right],\]
while the result with Algorithm \eqref{new} is
\[b=\left[-9,-\frac{10}{23},\frac{42}{23}\right].\]
The fact that the expansion of rational numbers with Algorithm \eqref{new} is shorter than the expansion with \textit{Browkin II} appears to be very frequent.
\end{Example}

\subsection{Periodicity} \label{secPURE}
In this section we provide the characterization of purely periodic continued fractions obtained with Algorithm \eqref{new}. 

\begin{Theorem}\label{pureR}
If $\alpha\in\mathbb{Q}_p$ has a periodic continued fraction expansion by means of Algorithm \eqref{new}, then the expansion is purely periodic if and only if
\[|\alpha|_p\geq 1, \ \ \ \ |\overline{\alpha}|_p<1. \]
\end{Theorem}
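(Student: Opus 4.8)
The plan is to run the classical Galois argument $p$-adically, letting the $p$-adic valuations of the complete quotients $\alpha_n$ and of their conjugates $\overline{\alpha_n}$ play the roles that $\alpha>1$ and $-1<\overline\alpha<0$ play in $\mathbb R$. First I would record the valuation pattern of the complete quotients produced by \eqref{new}. Writing $\alpha_{n+1}=1/(\alpha_n-b_n)$ and using that $s$ keeps the digits in non-positive positions while $t$ keeps those in strictly negative positions, one gets $v_p(\alpha_n-b_n)\geq 1$ for $n$ even and $v_p(\alpha_n-b_n)\geq 0$ for $n$ odd; hence for every $n\geq 1$
\[ v_p(\alpha_n)<0 \ \text{ if } n \text{ odd}, \qquad v_p(\alpha_n)\leq 0 \ \text{ if } n \text{ even}, \]
and in both cases $b_n$ is exactly the principal part of $\alpha_n$, so $v_p(b_n)=v_p(\alpha_n)$. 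A parity check on the rule defining $b_n$ then shows that a genuine period must have even length $\ell$, a fact I will use throughout.

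The engine of the proof is a recovery identity for the conjugates. Applying $\mathbb Q$-conjugation to $\alpha_n=b_n+1/\alpha_{n+1}$ gives $\overline{\alpha_n}=b_n+1/\overline{\alpha_{n+1}}$, so that $-1/\overline{\alpha_{n+1}}=b_n-\overline{\alpha_n}$. I will show that along a periodic expansion
\[ v_p(\overline{\alpha_n})>0 \ \text{ if } n \text{ even}, \qquad v_p(\overline{\alpha_n})\geq 0 \ \text{ if } n \text{ odd}. \]
Granting this, the digits of $b_n$ live in positions $\leq 0$ (for $n$ even) or $\leq -1$ (for $n$ odd), while those of $-\overline{\alpha_n}$ live in positions $\geq 1$ or $\geq 0$ respectively; since these supports are disjoint and $p$-adic carries propagate only upward, no carry mixes them, and therefore
\[ b_n=s\!\left(-1/\overline{\alpha_{n+1}}\right) \ \text{ if } n \text{ even}, \qquad b_n=t\!\left(-1/\overline{\alpha_{n+1}}\right) \ \text{ if } n \text{ odd}. \]
This expresses each partial quotient through $\overline{\alpha_{n+1}}$ alone; it is exactly what the sign function destroys in \textit{Browkin II}, and it is available here precisely because \eqref{new} uses the clean floor functions $s$ and $t$.

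Both implications then follow quickly. For the forward direction, pure periodicity gives $\alpha=\alpha_\ell$ with $\ell$ even, so $v_p(\alpha)=v_p(\alpha_\ell)\leq 0$, that is $|\alpha|_p\geq 1$; and the conjugate pattern at the even index $0$ gives $v_p(\overline\alpha)>0$, that is $|\overline\alpha|_p<1$. For the converse, assume $|\alpha|_p\geq 1$, $|\overline\alpha|_p<1$ and that the expansion is eventually periodic. The hypothesis $v_p(\overline{\alpha_0})>0$ is the base case from which the conjugate pattern propagates forward tie-free, since $v_p(b_n)\leq 0$ while $v_p(\overline{\alpha_n})\geq 0$ keeps the two valuations distinct at every step. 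Letting $m$ be the start of the period, we have $\alpha_m=\alpha_{m+\ell}$ and hence $\overline{\alpha_m}=\overline{\alpha_{m+\ell}}$; if $m\geq 1$, then $m-1$ and $m+\ell-1$ share the same parity ($\ell$ even), so the recovery identity forces $b_{m-1}=b_{m+\ell-1}$, whence $\alpha_{m-1}=b_{m-1}+1/\alpha_m=b_{m+\ell-1}+1/\alpha_{m+\ell}=\alpha_{m+\ell-1}$, contradicting the minimality of $m$. Thus $m=0$ and the expansion is purely periodic.

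The main obstacle I expect is the one point left open above, namely establishing the conjugate valuation pattern in the forward direction: I must secure at least one even index at which $v_p(\overline{\alpha_n})>0$, so that the tie-free forward propagation can be started around the cycle. The only obstruction is the degenerate scenario in which, at every even index, $\overline{\alpha_{n-1}}$ and $\alpha_{n-1}$ share all their negative-power digits, producing cancellations around the whole period; I would rule this out along the lines of \cite{BEI,MRSI}, since it would make $\alpha_n$ and $\overline{\alpha_n}$ too close $p$-adically to be distinct roots of an irreducible quadratic over $\mathbb Q$. Apart from this bootstrap, the genuinely new ingredient is the carry-direction argument behind the recovery identity, and verifying that the disjoint-support reasoning survives the passage to symmetric representatives is the part I would write out most carefully.
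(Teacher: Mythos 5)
Your converse implication is correct and is essentially the proof in the paper: starting from $v_p(\overline{\alpha}_0)>0$ you propagate the conjugate valuations $v_p(\overline{\alpha}_{2n})>0$, $v_p(\overline{\alpha}_{2n+1})\geq 0$ tie-free along the expansion, and at a minimal period start $h\geq 1$ you force $b_{h-1}=b_{h+k-1}$ and push the period back, using that $h-1$ and $h+k-1$ have the same parity. Where the paper gets this last equality from $v_p(b_{h-1}-b_{h+k-1})=v_p(\overline{\alpha}_{h-1}-\overline{\alpha}_{h+k-1})$ together with Proposition \ref{prop:varie} (distinct elements of $J_p$, resp. $K_p$, differ by valuation $\leq 0$, resp. $<0$), you instead recover $b_n$ as $s(-1/\overline{\alpha}_{n+1})$ or $t(-1/\overline{\alpha}_{n+1})$ by a disjoint-digit-support argument; with symmetric representatives no carries occur, so this is valid and the two mechanisms are interchangeable.

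The genuine gap is in the forward implication. There you need the same conjugate valuation pattern along a purely periodic expansion, but you have no base case: $v_p(\overline{\alpha}_n)>0$ at some even index is not a consequence of anything you have established --- it is, in substance, exactly the statement $|\overline{\alpha}|_p<1$ you are trying to prove. You flag this yourself and propose to exclude the ``degenerate scenario'' (at every index the conjugate shares with $\alpha_n$ precisely the digits the algorithm strips off) on the grounds that the conjugates would be ``too close $p$-adically to be distinct roots''. As stated this is not an argument: at any fixed index the degenerate scenario only forces $v_p(\alpha_n-\overline{\alpha}_n)\geq 1$, which is perfectly compatible with $\alpha_n\neq\overline{\alpha}_n$. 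To get a contradiction one must compound the cancellations around the whole cycle, e.g.\ via the identity $v_p(\alpha_{n+1}-\overline{\alpha}_{n+1})=v_p(\alpha_n-\overline{\alpha}_n)-v_p(\alpha_n-b_n)-v_p(\overline{\alpha}_n-b_n)$: in the degenerate scenario every subtracted term is nonnegative and the terms at even $n$ are strictly positive, so after one full period $v_p(\alpha_0-\overline{\alpha}_0)$ would strictly decrease, while periodicity says it is unchanged and irrationality says it is finite. This (or an equivalent argument showing $\overline{\alpha}$ would then have the same convergent expansion as $\alpha$ and hence equal it) is a real missing step, not a routine detail. The paper avoids it entirely: for the forward direction it quotes the standard relation $|\overline{\alpha}|_p=1/|b_{k-1}|_p$ for purely periodic expansions (from \cite{BEI,MRSI}, which comes from $\alpha\overline{\alpha}=-A_{k-2}/B_{k-1}$ and the valuation formulas $v_p(A_n)=v_p(b_0\cdots b_n)$, $v_p(B_n)=v_p(b_1\cdots b_n)$ for the convergents), and then $k-1$ odd gives $v_p(b_{k-1})<0$, hence $|\overline{\alpha}|_p<1$ at once. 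So: your converse matches the paper; your forward direction, as written, is incomplete at exactly the step that carries its content, though it can be repaired along the lines indicated above.
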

\begin{proof}
By the pure periodicity, we get that
\[|\alpha|_p=|b_0|_p=|b_k|_p\geq 1,\]
since the length $k$ of the period is even. Indeed, if $k$ is odd we can consider $2k$, so that the periodic part always starts by using the same floor function.
\begin{comment}
Indeed, as in \textit{Browkin II}, we can not have periodic continued fractions of odd period length since we use different floor functions.
\end{comment}
For the norm of the conjugate $\overline{\alpha}$ we apply the usual relation (see \cite{BEI} and \cite{MRSI}), that is
\[|\overline{\alpha}|_p=\frac{1}{|b_{k-1}|_p}.\]
Since $k-1$ is odd, this implies that $|b_{k-1}|_p>1$ and $|\overline{\alpha}|_p<1$, proving the necessary condition for the pure periodicity. Conversely, let us consider a periodic continued fraction expansion
\[\alpha=\left[b_0,\ldots,b_{h-1},\overline{b_h,\ldots,b_{h+k-1}}\right],\]
and let us assume that $|\alpha|_p\geq 1$ and $|\overline{\alpha}|_p<1$. We are going to prove that $h=0$, namely the expansion is purely periodic. For all $n\in \mathbb{N}$, the valuations of the complete quotients are
\begin{align*}
v_p(\alpha_{2n})&=v_p(b_{2n})\leq 0,\\
v_p(\alpha_{2n+1})&=v_p(b_{2n+1})< 0.
\end{align*}
For the valuations of their conjugates, let us notice that $|\overline{\alpha}_0|=|\overline{\alpha}|_p<1$ means $v_p(\overline{\alpha}_0)>0$, so that:
\begin{align*}
v_p(\overline{\alpha}_1)&=v_p\Big( \frac{1}{\overline{\alpha}_0  - b_0}\Big)=-v_p(\overline{\alpha}_0  - b_0)=-v_p(b_0)\geq0,\\
v_p(\overline{\alpha}_2)&=v_p\Big( \frac{1}{\overline{\alpha}_1  - b_1}\Big)=-v_p(\overline{\alpha}_1  - b_1)=-v_p(b_1)>0.
\end{align*}
The last two inequalities are true since
\begin{align*}
    v_p(\overline{\alpha}_0)&>0, \ \  & v_p(b_0)&\leq 0,\\
    v_p(\overline{\alpha}_1)&\geq 0, \ \  & v_p(b_1)&< 0,
\end{align*}
hence $v_p(\overline{\alpha}_i)<v_p(b_i)$ for both $i=1,2$. This argument can be iterated, so that, for all $n\in\mathbb{N}$,
\begin{align*}
v_p(\overline{\alpha}_{2n+1})&=v_p\Big( \frac{1}{\overline{\alpha}_{2n}  - b_{2n}}\Big)=-v_p(\overline{\alpha}_{2n}  - b_{2n})=-v_p(b_{2n})\geq 0,\\
v_p(\overline{\alpha}_{2n+2})&=v_p\Big( \frac{1}{\overline{\alpha}_{2n+1}  - b_{2n+1}}\Big)=-v_p(\overline{\alpha}_{2n+1}  - b_{2n+1})=-v_p(b_{2n+1})> 0.
\end{align*}

By the periodicity of the continued fraction of $\alpha$, we can observe that
\[\frac{1}{\alpha_{h-1}-b_{h-1}}=\alpha_h=\alpha_{h+k}=\frac{1}{\alpha_{h+k-1}-b_{h+k-1}}.\]
Therefore, we easily obtain the two relations
\begin{align*}
v_p(\alpha_{h-1}-\alpha_{h+k-1})&=v_p(b_{h-1}-b_{h+k-1}),\\
v_p(\overline{\alpha}_{h-1}-\overline{\alpha}_{h+k-1})&=v_p(b_{h-1}-b_{h+k-1}).
\end{align*}
Now, let us assume by contradiction that $h>0$ odd is the minimal starting point of the periodicity. Since in this case $h-1$ and $h+k-1$ are even,
\[v_p(\overline{\alpha}_{h-1})>0 \ \ \text{and} \ \ v_p(\overline{\alpha}_{h+k-1})>0.\]
It follows that
\[v_p(b_{h-1}-b_{h+k-1})=v_p(\overline{\alpha}_{h-1}-\overline{\alpha}_{h+k-1})\geq \min \{v_p(\overline{\alpha}_{h-1}),v_p(\overline{\alpha}_{h+k-1}) \}>0.\]
Since the partial quotients $b_{h-1}$ and $b_{h+k-1}$ are generated with the function $s$, by Proposition \ref{prop:varie} we conclude that $b_{h-1}=b_{h+k-1}$, that is, the periodicity can start at $h-1$. Therefore, the pre-period can not be odd.\\
Let us now assume that $h\geq 2$ is even.
In this case $h-1$ and $k-1$ are odd, hence
\[v_p(\overline{\alpha}_{h-1})\geq 0, \ \ \ v_p(\overline{\alpha}_{h+k-1})\geq 0.\]
Reasoning as in the previous case, we obtain
\[v_p(b_{h-1}-b_{h+k-1})_p=v_p(\overline{\alpha}_{h-1}-\overline{\alpha}_{h+k-1})\geq \min \{v_p(\overline{\alpha}_{h-1}),v_p(\overline{\alpha}_{h+k-1}) \}\geq 0.\]
In this second case, $b_{h-1}$ and $b_{h+k-1}$ are generated with the function $t$, hence by Proposition \ref{prop:varie} we conclude that $b_{h-1}=b_{h+k-1}$. Thus the pre-period can not be a positive even number. It follows that $h=0$ and the expansion of $\alpha$ with Algorithm \eqref{new} is purely periodic.
\end{proof}

\begin{Remark}
In \cite{BCMI}, the authors proved some conditions to obtain periodic expansions, with pre-period length $1$ and period length $2$, for quadratic irrationals using \textit{Browkin II}. Moreover, in \cite{BII} and \cite{MRSI}, it has been proved that there exist infinitely many square roots of integers that are periodic with period length $2$ and $4$. In all these cases, the sign function is not used during the algorithm, hence these results are true also for Algorithm \eqref{new}.
\end{Remark}

\subsection{Pre-periods for expansions of square roots}\label{sub: prepe}
In this section we analyze the possible lengths of pre-periods for the expansions of square roots of integers obtained using Algorithm \eqref{new}. In particular, we show that the pre-period length is always $1$ for square roots of integers with valuation zero, obtaining a result similar to Galois' Theorem.

In order to prove it, we define the following algorithm, which is similar to Algorithm \eqref{new} but the role of the functions $s$ and $t$ is switched.

\begin{align} \label{newstar}
\begin{cases}
b_n=t(\alpha_n) \ \ \ \ \ & \textup{if} \ n \ \textup{even}\\
b_n=s(\alpha_n) & \textup{if} \ n \ \textup{odd}\\
\alpha_{n+1}=\frac{1}{\alpha_n-b_n}.
\end{cases}
\end{align}

The $p$--adic convergence of the continued fraction generated by this algorithm is guaranteed since also in this case $v_p(b_nb_{n+1})<0$ for all $n\in\mathbb{N}$ (see \cite{MRSII}). In order to characterize the length of the pre-periods for Algorithm \eqref{new} we need an analogue of Theorem \ref{pureR} for Algorithm \eqref{newstar}.

\begin{Theorem}\label{pureRstar}
Let us consider $\alpha\in\mathbb{Q}_p$ with a periodic continued fraction obtained using Algorithm \eqref{newstar}. Then the expansion is purely periodic if and only if
\[|\alpha|_p> 1, \ \ \ \ |\overline{\alpha}|_p\leq 1. \]
\end{Theorem}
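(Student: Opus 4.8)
The plan is to adapt the proof of Theorem \ref{pureR} almost verbatim, exchanging the roles of the two floor functions: in Algorithm \eqref{newstar} the even-indexed partial quotients are produced by $t$ and the odd-indexed ones by $s$, exactly opposite to Algorithm \eqref{new}. Consequently the two parts of Proposition \ref{prop:varie} get used with their roles swapped, and the strict/non-strict inequalities on the conjugate valuations are interchanged. First I would record the valuation pattern of the complete quotients: since the even steps apply $t$ (so $v_p(\alpha_n-b_n)\geq 0$) and the odd steps apply $s$ (so $v_p(\alpha_n-b_n)\geq 1$), the recursion $\alpha_{n+1}=1/(\alpha_n-b_n)$ gives $v_p(\alpha_{2n})<0$ for every $n\geq 1$ and $v_p(\alpha_{2n+1})\leq 0$ for every $n\geq 0$. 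Correspondingly $b_{2n}=t(\alpha_{2n})\in K_p$ with $v_p(b_{2n})=v_p(\alpha_{2n})<0$, while $b_{2n+1}=s(\alpha_{2n+1})\in J_p$ with $v_p(b_{2n+1})=v_p(\alpha_{2n+1})\leq 0$.

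For the necessity I would argue, as in Theorem \ref{pureR}, that the period length $k$ may be taken even (otherwise replace it by $2k$, so the periodic block always begins with the $t$-step). Pure periodicity then gives $\alpha_0=\alpha_k$ with $k\geq 2$ even, and the complete-quotient pattern above forces $v_p(\alpha_0)=v_p(\alpha_k)<0$, that is $|\alpha|_p>1$. Using the standard relation $|\overline{\alpha}|_p=1/|b_{k-1}|_p$ (see \cite{BEI, MRSI}) together with $b_{k-1}=s(\alpha_{k-1})$ (as $k-1$ is odd), we have $|b_{k-1}|_p\geq 1$ and hence $|\overline{\alpha}|_p\leq 1$.

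For the sufficiency I would assume $|\alpha|_p>1$ and $|\overline{\alpha}|_p\leq 1$, i.e. $v_p(\alpha_0)<0$ and $v_p(\overline{\alpha}_0)\geq 0$, and write the expansion as $[b_0,\ldots,b_{h-1},\overline{b_h,\ldots,b_{h+k-1}}]$ with $k$ even. From $\overline{\alpha}_{i+1}=1/(\overline{\alpha}_i-b_i)$, using $v_p(b_{2n})<0\leq v_p(\overline{\alpha}_{2n})$ and $v_p(b_{2n+1})\leq 0<v_p(\overline{\alpha}_{2n+1})$ at the even and odd steps, a simultaneous induction gives $v_p(\overline{\alpha}_{2n})\geq 0$ and $v_p(\overline{\alpha}_{2n+1})>0$. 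Assuming $h>0$ minimal, the identity $\alpha_h=\alpha_{h+k}$ yields $v_p(\overline{\alpha}_{h-1}-\overline{\alpha}_{h+k-1})=v_p(b_{h-1}-b_{h+k-1})$. If $h$ is odd, then $h-1$ and $h+k-1$ are even, the corresponding $b$'s lie in $K_p$, and the conjugate valuations are $\geq 0$, so $v_p(b_{h-1}-b_{h+k-1})\geq 0$; by Proposition \ref{prop:varie}(2) this forces $b_{h-1}=b_{h+k-1}$. If $h\geq 2$ is even, then $h-1$ and $h+k-1$ are odd, the $b$'s lie in $J_p$, and the conjugate valuations are $>0$, so $v_p(b_{h-1}-b_{h+k-1})>0$; by Proposition \ref{prop:varie}(1) again $b_{h-1}=b_{h+k-1}$. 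In both cases the period could start one step earlier, contradicting minimality, so $h=0$.

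The delicate point is making the strict and non-strict inequalities match correctly: the $t$-case (even indices) needs only $v_p(\overline{\alpha})\geq 0$ to contradict the strict bound $v_p<0$ of Proposition \ref{prop:varie}(2) for $K_p$, whereas the $s$-case (odd indices) needs the strict $v_p(\overline{\alpha})>0$ to contradict the non-strict bound $v_p\leq 0$ of Proposition \ref{prop:varie}(1) for $J_p$. I expect verifying this alignment, together with the base step $v_p(\overline{\alpha}_0)\geq 0$ and the inequality $v_p(b_i)<v_p(\overline{\alpha}_i)$ driving the induction, to be the only genuinely load-bearing computation; everything else transcribes directly from the proof of Theorem \ref{pureR}.
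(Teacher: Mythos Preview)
Your proposal is correct and follows exactly the approach the paper indicates: the authors do not spell out the argument but simply say to adapt the proof of Theorem \ref{pureR} by switching the two floor functions, which is precisely what you have done. Your careful tracking of which inequalities become strict and which become non-strict (so that the $t$-case invokes Proposition \ref{prop:varie}(2) and the $s$-case invokes Proposition \ref{prop:varie}(1)) is the only nontrivial bookkeeping, and you have handled it correctly.
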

\begin{proof}
The proof is straightforward adapting the technique of Theorem \ref{pureR} and switching the two floor functions.
\end{proof}

Using the results of Theorem \ref{pureR} and \ref{pureRstar} we are able to prove the following result, characterizing the pre-period of periodic continued fraction expansions of square roots of integers obtained using Algorithm \eqref{new}.

\begin{Proposition}\label{newprep}
Let $\sqrt{D}\in\mathbb{Q}_p$, with $D\in\mathbb{Z}$ not a square and $p\nmid D$, having a periodic continued fraction obtained with Algorithm \eqref{new}. Then the pre-period has length $1$.
\end{Proposition}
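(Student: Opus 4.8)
The plan is to reduce the statement about the pre-period of $\sqrt{D}$ under Algorithm \eqref{new} to a pure-periodicity statement about its first complete quotient $\alpha_1$ under the companion Algorithm \eqref{newstar}. The key observation is that the tail $[b_1,b_2,b_3,\ldots]$ of the Algorithm \eqref{new} expansion of $\sqrt{D}$ represents $\alpha_1=\frac{1}{\sqrt{D}-s(\sqrt{D})}$, and, since the index parity is shifted by one, this tail is exactly the expansion produced by Algorithm \eqref{newstar} applied to $\alpha_1$: setting $\beta_0=\alpha_1$, one checks $b_0^\ast=t(\beta_0)=t(\alpha_1)=b_1$, $\beta_1=\alpha_2$, $b_1^\ast=s(\beta_1)=b_2$, and in general $\beta_m=\alpha_{m+1}$, $b_m^\ast=b_{m+1}$. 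Thus the complete quotients and partial quotients coincide for the two processes, and the pre-period of $\sqrt{D}$ under \eqref{new} equals $1$ precisely when $\alpha_1$ is purely periodic under \eqref{newstar}; moreover periodicity of $\sqrt{D}$ transfers to periodicity of $\alpha_1$, so Theorem \ref{pureRstar} will be applicable.

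First I would establish that the pre-period is at least $1$, i.e. that $\sqrt{D}$ is not itself purely periodic. Since $p\nmid D$ we have $v_p(\sqrt{D})=0$, so $|\sqrt{D}|_p=1$, and because $\overline{\sqrt{D}}=-\sqrt{D}$ we also have $|\overline{\sqrt{D}}|_p=1$. The condition $|\overline{\sqrt{D}}|_p<1$ of Theorem \ref{pureR} therefore fails, so $\sqrt{D}$ cannot be purely periodic and the pre-period is positive.

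Next I would show the pre-period is at most $1$ by verifying the hypotheses of Theorem \ref{pureRstar} for $\alpha_1$. Write $\sqrt{D}=\sum_{i\geq 0}a_ip^i$ with $a_i\in\{-\tfrac{p-1}{2},\ldots,\tfrac{p-1}{2}\}$ and $a_0\neq 0$ (as $v_p(\sqrt{D})=0$); then $b_0=s(\sqrt{D})=a_0$. Since $\sqrt{D}-b_0=\sum_{i\geq 1}a_ip^i$ is nonzero (else $\sqrt{D}$ would be rational) with valuation at least $1$, we get $v_p(\alpha_1)<0$, hence $|\alpha_1|_p>1$. For the conjugate, $\overline{\alpha_1}=\frac{1}{-\sqrt{D}-a_0}$, and reducing modulo $p$ gives $-\sqrt{D}-a_0\equiv -2a_0\pmod p$; because $p$ is odd and $0\neq|a_0|<\tfrac{p}{2}$ we have $p\nmid 2a_0$, so $v_p(-\sqrt{D}-a_0)=0$ and thus $|\overline{\alpha_1}|_p=1\leq 1$. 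By Theorem \ref{pureRstar}, $\alpha_1$ is purely periodic under Algorithm \eqref{newstar}, so the tail of the \eqref{new} expansion of $\sqrt{D}$ is purely periodic and the pre-period is at most $1$.

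Combining the two bounds yields pre-period exactly $1$. The main obstacle, and the one point requiring genuine care, is the conjugate valuation computation: one must confirm that the leading coefficient of $-\sqrt{D}-s(\sqrt{D})$ does not vanish, which is exactly where $p$ being odd and $a_0\neq 0$ (guaranteed by $p\nmid D$) enter. Everything else is bookkeeping, namely checking that the index-parity shift correctly matches Algorithm \eqref{new} restricted to its tail with Algorithm \eqref{newstar}, so that the pure-periodicity criterion of Theorem \ref{pureRstar} indeed governs the pre-period of the original expansion.
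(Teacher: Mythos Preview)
Your proposal is correct and follows essentially the same route as the paper: both arguments first exclude pure periodicity of $\sqrt{D}$ via Theorem~\ref{pureR}, then pass to $\alpha_1=1/(\sqrt{D}-s(\sqrt{D}))$, verify $v_p(\alpha_1)<0$ and $v_p(\overline{\alpha}_1)=0$ using that $-\sqrt{D}-b_0\equiv -2b_0\not\equiv 0\pmod p$ for $p$ odd, and conclude by Theorem~\ref{pureRstar}. You are a bit more explicit than the paper about the index-parity identification between the tail of the Algorithm~\eqref{new} expansion and the Algorithm~\eqref{newstar} expansion of $\alpha_1$, and about why $\sqrt{D}-b_0\neq 0$, but the substance is the same.
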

\begin{proof}
By the characterization of Theorem \ref{pureR}, $\alpha$ can not have a purely periodic continued fraction. We can write $\alpha$ as
\[\alpha=b_0+\frac{1}{\alpha_1},\]
where $b_0=s(\alpha)$ and $\alpha_1$ is the second complete quotient.
In order to prove that the periodic expansion of $\alpha$ has pre-period of length $1$ we show that $\alpha_1$ has purely periodic expansion with Algorithm \eqref{newstar} starting with the function $t$. Therefore, by Theorem \ref{pureRstar}, we want to prove that $v_p(\alpha_1)<0$ and $v_p(\overline{\alpha}_1)\geq 0$. First of all we notice that, since $\alpha$ has a periodic continued fraction expansion, also $\alpha_1$ does. Then, by the construction of the algorithm,
\[v_p(\alpha_1)=-v_p(\alpha-s(\alpha))<0,\]
hence the condition on $\alpha_1$ is true. Since $\overline{\alpha}=-\sqrt{D}$, then
\[\overline{\alpha}=-b_0+a_1'p+a_1'p^2+\ldots.\]
We have that
\[v_p(\overline{\alpha}_1)=-v_p(\overline{\alpha}-b_0)<0,\]
if and only if $v_p(-2b_0+\ldots)>0$, that is never the case for $p\neq 2$. This means that $v_p(\overline{\alpha}_1)\geq 0$ and, by Theorem \ref{pureR}, it has a purely periodic expansion
\[\alpha_1=\left[\overline{b_1,\ldots,b_{k}}\right].\]
Therefore, the expansion of $\alpha=\sqrt{D}$ is
\[\sqrt{D}=\left[b_0,\overline{b_1,\ldots,b_{k}}\right],\]
that has pre-period $1$.
\end{proof}

To conclude this section, we consider also the case $\alpha=\sqrt{D}$, with $v_p(\sqrt{D})\neq 0$. If $v_p(\sqrt{D})<0$, then
\[\alpha_{1}=\frac{1}{\alpha-b_0}, \ \ \ \overline{\alpha}_{1}=\frac{1}{\overline{\alpha}-b_0}, \]
so that $v_p(\alpha_{1})<0$ and $v_p(\overline{\alpha}_{1})>0$. Hence, with a similar argument of Proposition \ref{newprep}, also in this case we can conclude that $\alpha_{1}$ is purely periodic and the continued fraction of $\alpha$ has pre-period $1$. Notice that, if $v_p(\sqrt{D})<0$, then $D$ is not an integer but a rational whose denominator is divided by $p$. Instead, if $v_p(\sqrt{D})>0$, then $b_0=0$ and
\[\alpha_{1}=\frac{1}{\alpha}, \ \ \ \overline{\alpha}_{1}=\frac{1}{\overline{\alpha}}.\]
Since $v_p(\alpha_1)=v_p(\overline{\alpha}_1)<0$, we are exactly in the previous case, so the expansion of $\alpha$ has pre-period $2$ with a $0$ as first partial quotient. Hence, we have proved the following result, very similar to Galois' Theorem for classical continued fractions.

\begin{Proposition}\label{newcomplete}
Let $\sqrt{D}\in\mathbb{Q}_p$, with $D\in\mathbb{Z}$ and $v_p(\sqrt{D})=e$, having a periodic continued fraction using Algorithm \eqref{new}. Then the expansion of $\sqrt{D}$ has pre-period
\[\begin{cases}
1 \ \ \ \text{if } e\leq 0\\
2 \ \ \ \text{if } e>0.
\end{cases}\]
Moreover, in the case $e>0$, its continued fraction expansion is
\[\sqrt{D}=[0,b_0,\overline{b_1,\ldots,b_h}],\]
where $b_0,\ldots,b_h$ are the partial quotients of
\[\frac{\sqrt{D}}{D}=[b_0,\overline{b_1,\ldots,b_h}].\]
\end{Proposition}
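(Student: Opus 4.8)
The plan is to convert the pre-period question into a pure-periodicity question settled by Theorems \ref{pureR} and \ref{pureRstar}. The key observation is that, because Algorithm \eqref{new} applies $s$ at even steps and $t$ at odd steps, the continued fraction read off from the first complete quotient $\alpha_1$ of a \eqref{new}-expansion is exactly a \eqref{newstar}-expansion of $\alpha_1$ (the two floor functions have swapped parity), and symmetrically. Consequently $\sqrt D$ has pre-period $1$ under \eqref{new} precisely when $\alpha_1=1/(\sqrt D-b_0)$ is purely periodic under \eqref{newstar}, i.e.\ when $v_p(\alpha_1)<0$ and $v_p(\overline\alpha_1)\ge 0$ by Theorem \ref{pureRstar}. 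Throughout I would use $\overline{\sqrt D}=-\sqrt D$ together with the fact that, for the symmetric digit set, negation acts digitwise, so $s(-\beta)=-s(\beta)$ and $t(-\beta)=-t(\beta)$. Since $D\in\mathbb Z$ forces $e=v_p(\sqrt D)\ge 0$, the case $e=0$ (where $p\nmid D$) is exactly Proposition \ref{newprep} and already gives pre-period $1$; the genuine content is the case $e>0$.

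To prepare for it I would first record the auxiliary statement for a square root $\beta$ of a rational with $v_p(\beta)<0$. Here $b_0=s(\beta)$, so $\beta-b_0=\beta-s(\beta)$ has valuation $\ge 1$ and $v_p(\alpha_1)=-v_p(\beta-s(\beta))<0$. For the conjugate I would write $\overline\beta-b_0=-\beta-s(\beta)=-2\,s(\beta)+(\overline\beta-s(\overline\beta))$; the second summand has valuation $\ge 1$, while $v_p(2\,s(\beta))=v_p(s(\beta))=v_p(\beta)<0$ because $p$ is odd, so $v_p(\overline\beta-b_0)=v_p(\beta)<0$ and hence $v_p(\overline\alpha_1)=-v_p(\beta)>0\ge 0$. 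Theorem \ref{pureRstar} then makes $\alpha_1$ purely periodic under \eqref{newstar}, giving $\beta=[b_0,\overline{b_1,\dots,b_k}]$ of pre-period $1$.

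For $e>0$ one has $s(\sqrt D)=0$, so the zeroth partial quotient of $\sqrt D$ vanishes and $\alpha_1=1/\sqrt D=\sqrt D/D$, a square root of the rational $1/D$ with $v_p(\alpha_1)=v_p(\overline\alpha_1)=-e<0$. Since $v_p(\overline\alpha_1)<0$, Theorem \ref{pureRstar} shows $\alpha_1$ is not purely periodic, so the pre-period exceeds $1$; to see that it equals $2$ I would advance one step along the tail, which is a \eqref{newstar}-expansion. Writing $c_0=t(\alpha_1)$ gives $v_p(\alpha_2)\le 0$, and, using $\overline\alpha_1=-\alpha_1$ and again $p\neq 2$, the identity $\overline\alpha_1-c_0=-2\,t(\alpha_1)+(\text{nonnegative part})$ has valuation $-e<0$, so $v_p(\overline\alpha_2)=e>0$. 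By Theorem \ref{pureR}, $\alpha_2$ is purely periodic under \eqref{new}; hence $\alpha_1=\sqrt D/D$ has pre-period $1$, say $\sqrt D/D=[b_0,\overline{b_1,\dots,b_h}]$, and prepending the vanishing digit yields $\sqrt D=[0,b_0,\overline{b_1,\dots,b_h}]$ of pre-period $2$, which is the ``moreover'' assertion.

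The main obstacle is the parity bookkeeping: at each complete quotient one must know whether $s$ or $t$ is applied and check that passing from $\sqrt D$ to $\alpha_1$ (and from $\alpha_1$ to $\alpha_2$) really toggles between \eqref{new} and \eqref{newstar}, so that the correct criterion is invoked, these being genuinely different (Theorem \ref{pureR} requires $|\alpha|_p\ge 1,\ |\overline\alpha|_p<1$, whereas Theorem \ref{pureRstar} requires $|\alpha|_p>1,\ |\overline\alpha|_p\le 1$). The only true computation is the non-cancellation $v_p(2\,s(\beta))=v_p(s(\beta))$ and its $t$-analogue, which is exactly where $p\neq 2$ enters and which pins down the sign of $v_p(\overline\alpha_1)$ in each case.
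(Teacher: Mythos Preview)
Your proposal is correct and follows essentially the same route as the paper: reduce the pre-period question for $\sqrt{D}$ to pure periodicity of $\alpha_1$ (or $\alpha_2$) via Theorems~\ref{pureR} and~\ref{pureRstar}, using the identity $\overline{\sqrt D}=-\sqrt D$ and the fact that $p\neq 2$ forces $v_p(2s(\beta))=v_p(s(\beta))$ (resp.\ with $t$). Your write-up is in fact a bit more careful than the paper's: where the paper, for $e>0$, simply says that $\alpha_1=\sqrt D/D$ with $v_p(\alpha_1)=v_p(\overline\alpha_1)<0$ puts us ``exactly in the previous case'', you correctly note that the tail of a \eqref{new}-expansion at an odd index is a \eqref{newstar}-expansion, and so you redo the auxiliary step with $t$ in place of $s$ and invoke Theorem~\ref{pureR} for $\alpha_2$ rather than Theorem~\ref{pureRstar}.
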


\section{Numerical computations} \label{sec: numerical}
In this section we collect some numerical results about the Browkin's algorithms \eqref{Br1} and \eqref{Br2} and Algorithm \eqref{new}. 
It turns out that, in addition to the good theoretical results already highlighted in the previous sections, Algorithm \eqref{new} appears to be periodic on more quadratic irrationals than  \textit{Browkin I} and \textit{Browkin II} for all odd primes less than $100$, except $p=3$ for \textit{Browkin II}. 
All the computations have been performed on the first $1000$ complete quotients of $\sqrt{D}\in\mathbb{Q}_p$, for all the odd primes $p$ less than $100$ and $1\leq D\leq 1000$, with $D$ not a square and $v_p(D)=0$. The numerical computations have been performed using SageMath and the code is publicly available \footnote{\href{https://github.com/giulianoromeont/p-adic-continued-fractions}{https://github.com/giulianoromeont/p-adic-continued-fractions}}.

\subsection{Some \textit{Browkin II} expansions}
We start with an observation on the Euclidean norm of odd partial quotients that allows us to correct some wrong \textit{Browkin II} expansions listed in \cite{BII}.
\begin{Remark}
The Euclidean norm of the odd partial quotients in \textit{Browkin II}  can not be greater than $1$. In fact, we know by Proposition \ref{prop:varie} that $0\leq|t(\alpha_k)|<\frac{1}{2}$ for all $k\in\mathbb{N}$. Then, if the sign is not used, $|b_{k}|=|t(\alpha_k)|<\frac{1}{2}$. If the sign is used, then $|b_k|=1-|t(\alpha_{k})|$ in both cases, so $0<|b_k|<1$, for $k$ odd. A more precise result is given in \cite{BCMI}, where this inequality is determinant in the proof for the finiteness of \textit{Browkin II} over the rational numbers.
\end{Remark}
Starting from this remark, we can notice that some of the expansions listed in \cite{BII} are wrong. In fact, in the $5$--adic continued fraction of $\sqrt{34}$, $\sqrt{39}$, $\sqrt{54}$, $\sqrt{69}$ and $\sqrt{99}$, respectively, $b_5=-\frac{28}{25}$, $b_9=-\frac{6}{5}$, $b_9=\frac{28}{25}$, $b_3=\frac{6}{5}$ and $b_{13}=\frac{32}{25}$, that are all greater than $1$ in Euclidean norm. We believe it is an error of implementation where the sign function is added instead of being subtracted from $t(\alpha_k)$. Three of them are still periodic and the correct expansions are
\begin{align*}
\sqrt{34}=& \left[2,\overline{-\frac{1}{5},1,-\frac{2}{5},-1,\frac{22}{25},-1,-\frac{2}{5},1,-\frac{1}{5},-1,-\frac{6}{25},-1} \right],\\
\sqrt{54}=& \left[2,\frac{2}{25},-1,\frac{1}{5},-2,-\frac{1}{5},\overline{-1,-\frac{2}{5},1,\frac{4}{5}} \right],\\
\sqrt{69}=& \Bigg[2,\overline{-\frac{2}{5},1,-\frac{4}{5},1,-\frac{1}{5},-1,-\frac{2}{5},2,-\frac{1}{5},2,-\frac{12}{25},2,-\frac{1}{5},2,}\\
&\overline{-\frac{2}{5},-1,-\frac{1}{5},1,-\frac{4}{5},1,-\frac{2}{5},-1,\frac{2}{5},1,-\frac{76}{125},1,\frac{2}{5},-1} \Bigg],
\end{align*}
while for $\sqrt{39}$ and $\sqrt{99}$ we have not observed any period.

\subsection{Periodic square roots of integers}
In this section we collect the results on the periodicity of Algorithm \eqref{new}, compared with \textit{Browkin I} and \textit{Browkin II}.

\begin{Example}
The behaviour of the periodicity of the three algorithms can be different. In this example we see some of the several cases that are possible to encounter. We consider $p=5$, but analogous observations hold for other primes. In $\mathbb{Q}_5$, $\sqrt{19}$ has a periodic continued fraction using Algorithm \eqref{new}, with expansion
\[\sqrt{19}= \left[2,\overline{-\frac{2}{5},2,\frac{1}{5},-2,-\frac{2}{5},-\frac{12}{5},\frac{2}{5},-2,\frac{8}{25},2,\frac{1}{5},-1,-\frac{2}{5},-\frac{8}{5},\frac{2}{5},-2,\frac{12}{25},2,\frac{2}{5},-1} \right],
\]
but no period has been detected with \textit{Browkin I} nor \textit{Browkin II}, up to the 1000th partial quotient. Vice versa, using \textit{Browkin II}, $\sqrt{69}$ is periodic (see above) while no period has been observed with the other algorithms. Moreover, on some square roots \textit{Browkin II} and Algorithm \eqref{new} are both periodic but they have different expansions. For example, the \textit{Browkin II} expression of $\sqrt{129}$ is
\begin{align*}
\sqrt{129}=& \Bigg[2,\frac{4}{125},-1,-\frac{4}{5},1,\frac{4}{5},-1,\frac{4}{5},-1,-\frac{2}{5},\\
&2,-\frac{1}{5},2,\frac{2}{5},2,-\frac{1}{5},2,-\frac{2}{5},\overline{-1,\frac{4}{5},-1,-\frac{1}{5},-2,\frac{3}{5}} \Bigg],  
\end{align*}
while with Algorithm \eqref{new} it is
\[\sqrt{129}= \left[2,\overline{\frac{4}{125},-1,\frac{1}{5},-\frac{4}{5},-\frac{1}{5},-1,-\frac{2}{5},2,-\frac{1}{5},2,\frac{2}{5},2,-\frac{1}{5},2,-\frac{2}{5},-1,-\frac{1}{5},-\frac{4}{5},\frac{1}{5},-1} \right].\]
Using \textit{Browkin I}, we did not detect any period for $\sqrt{129}$.
\end{Example}

In Tables \ref{Tab: Br1}, \ref{Tab: Br2}, \ref{Tab: New} in Appendix \ref{App: A}, we can see that periodic square roots of integers with \textit{Browkin I} are very few with respect to \textit{Browkin II} and Algorithm \eqref{new}. Moreover, except for $p=3$, the number of periodic expansion with Algorithm \eqref{new} is greater or equal than \textit{Browkin II}.
In Figure \ref{Fig: number} we plot the number of periodic square roots of integers for all the three algorithms, varying the prime $p$.
\begin{figure}[h]
\centering
\includegraphics[scale=0.7]{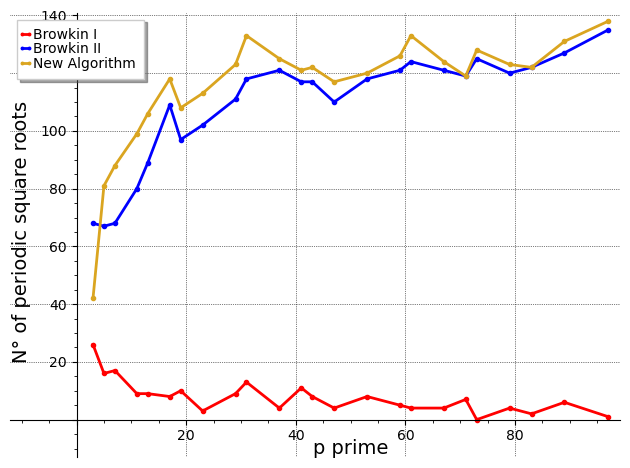}
\caption{\label{Fig: number}Number of periodic square roots of integers with \textit{Browkin I}, \textit{Browkin II} and Algorithm \eqref{new}.}
\end{figure}

\begin{Remark}\label{Rem: exp}
\textit{Browkin II} and Algorithm \eqref{new} tend to present a similar behaviour for large primes. This result was expected since the two algorithms are different only in the case of $a_0=0$ in the $p$--adic expansion of one of the odd complete quotients and the probability of having it equal to zero is around $\frac{1}{p}$, which approaches zero for growing $p$.
%Moreover, as we underline in Section \ref{Sec: periods}, short periods are very frequent also for large values of $D$ and $p$. Hence, for large $p$, it is very likely to detect periodicity before the sign function is used, i.e. before two algorithms differ on some partial quotients.
\end{Remark}

\subsection{Pre-periods of periodic expansions}
\label{Sec: preperiods}
The length of the pre-period of periodic continued fractions obtained by \textit{Browkin I} is $2$ (see \cite{BEI}) and by Algorithm \eqref{new} is $1$ (see Proposition \ref{newprep}). On the contrary, for the lengths of the pre-periods of \textit{Browkin II}, Proposition \ref{prepMRS} seems the best we can obtain. In fact, during our analysis, although most of the square roots presented pre-period $1$, we also observed pre-periods of several even lengths. In \textit{Browkin II}, pre-periods of even length are an ``anomalous" behaviour which occurs when the sign function is used in \eqref{Br2} (for more details, see \cite{MRSI}). Therefore, in light of Remark \ref{Rem: exp}, for large values of $p$ we expect to have often pre-period $1$. Indeed, for $p\geq 31$, no pre-period greater than $1$ has been observed. In Table \ref{Tab: prep}, we list the mean pre-periods of periodic \textit{Browkin II} continued fractions up to $p=29$.\\ 

\begin{table}[H]
\centering
\begin{tabular}{|l|l|l|l|l|l|l|l|l|l|}
\hline 
  \ \ \ \ \ p  & 3 & 5 & 7 & 11 & 13 & 17 & 19 & 23 & 29\\
  \hline
  Mean pre-period & 18.85 &  7.49  & 2.96 & 1.45 & 1.39 & 1.17 & 1.08 & 1.13 & 1.01 \\
  \hline
\end{tabular}
\caption{\label{Tab: prep}Mean pre-periods of periodic \textit{Browkin II} expansions.}
\end{table}

\subsection{Periods of periodic expansions}
\label{Sec: periods}

The length of the periods for periodic \textit{Browkin I} continued fractions is very often $2$, especially for large values of $p$. In fact, when $p$ increases, periodic \textit{Browkin I} continued fractions are rare and most expansions have both pre-period and period of length $2$. Moreover, let us notice that the mean period length for \textit{Browkin II} and Algorithm \eqref{new} is, in general, decreasing for growing $p$. In Figure \ref{Fig: meanperiods}, we plot the mean period of periodic square roots of integers for all the three algorithms, varying the prime $p$.
\begin{figure}[h]
\centering
\includegraphics[scale=0.75]{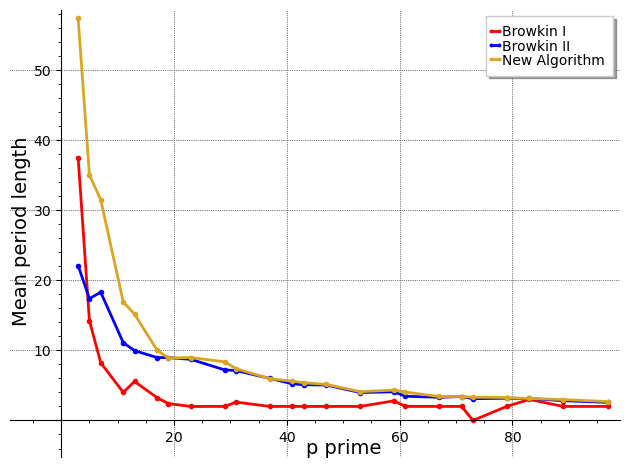}
\caption{\label{Fig: meanperiods}Mean periods of periodic square roots with \textit{Browkin I}, \textit{Browkin II} and Algorithm \eqref{new}.}
\end{figure}

From Tables \ref{Tab: Br1}, \ref{Tab: Br2}, \ref{Tab: New} in Appendix \ref{App: A}, we can observe that, when periodicity is detected, long periods are very uncommon for all values of $p$, especially for $p$ large. 
Indeed, for \textit{Browkin II} and  Algorithm \eqref{new} the $90\%$ of the periods are shorter than 30 for all $11 \leq p \leq 97$. 
In Figure \ref{Fig: periods}, we plot the length of the periods for periodic square roots of integers, in function of the size of the integer $D$, for $p = 5$.

\begin{figure}[H]\label{Fig: periodic}
\centering
\includegraphics[scale=0.75]{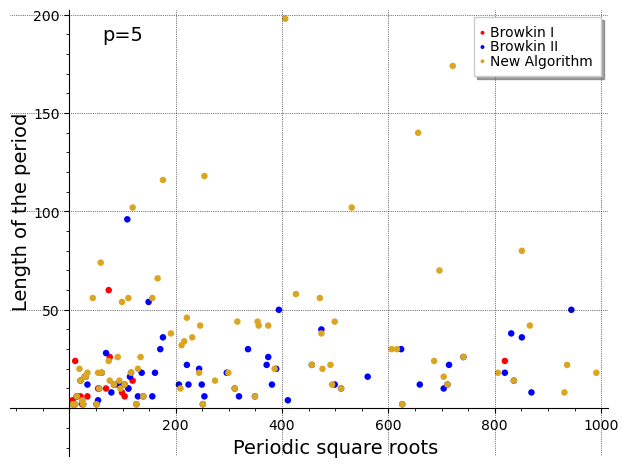}
\caption{\label{Fig: periods} Period lengths of periodic square roots with \textit{Browkin I}, \textit{Browkin II} and Algorithm \eqref{new}, for $p=5$.}
\end{figure}

\begin{Remark}\label{Rem: perd}
The numerical computations about periodicity give the suggestion that the Lagrange's theorem does not hold for $p$--adic continued fractions obtained by the studied algorithms (even if a proof of this statement is still missing). Indeed, for continued fractions over the real numbers, the maximum length of the periods for the square root $\sqrt{D}$ for all integers $1 \leq D \leq 1000$ is 60. On the contrary, for \textit{Browkin I} and Algorithm \eqref{new} there are some square roots whose expansion, if periodic, should have a period greater than 998 and 999, respectively, and this seems unlikely. A similar observation holds also for \textit{Browkin II}, even if in this case we do not know a-priori the length of the pre-period, making more difficult to deal with the study of perodicity of this algorithm. However, it seems very unlikely, for instance, that \textit{Browkin II} produces a periodic expansion for $\sqrt{19}$ in $\mathbb Q_5$ with the sum of the lengths of pre-period and period greater than 1000. 

%From Figure \ref{Fig: periods}, we can get an idea of why it is largely believed that none of the actual algorithms becomes eventually periodic for every quadratic irrationals, i.e. Lagrange's theorem does not hold for $p$--adic continued fractions up to knowledge. In fact, although we checked the expansions for 1000 complete quotients, no period greater than 100 has been detected for \textit{Browkin II} and greater than 200 for Algorithm \eqref{new}. This fact does not seem to change neither increasing $D$ nor $p$, i.e. it does not seem that bigger square roots, whenever they are periodic, show longer periods. For example, it is very unlikely that $\sqrt{19}$ has a periodic \textit{Browkin II} continued fraction in $\mathbb{Q}_5$ with a period that is greater than 1000.
\end{Remark}

\subsection{Quality of approximation}
In this section we analyze the approximations of square roots of integers by means of the sequence of convergents of the three algorithms. In general, we can observe that given $\alpha = [b_0, b_1, \ldots]$, we have
\[\alpha=[b_0,b_1,\ldots,b_n,\alpha_{n+1}]=\frac{\alpha_{n+1}A_n+A_{n-1}}{\alpha_{n+1}B_n+B_{n-1}},\]
from which
\[\alpha-\frac{A_n}{B_n}=\frac{\alpha_{n+1}A_n+A_{n-1}}{\alpha_{n+1}B_n+B_{n-1}}-\frac{A_n}{B_n}=\frac{(-1)^n}{(\alpha_{n+1}B_n+B_{n-1})B_n}.\]
Since $v_p(\alpha_{n+1})=v_p(b_{n+1})$, then $v_p(\alpha_{n+1}B_n+B_{n-1})=v_p(B_{n+1})$ and therefore
\begin{equation}\label{Eq: val2}
v_p\left(\alpha_0-\frac{A_n}{B_n}\right)=-v_p(B_{n}B_{n+1}),
\end{equation}
i.e., 
\begin{equation}\label{Eq: val}
\left|\alpha-\frac{A_n}{B_n}\right|_p=p^{v_p(B_nB_{n+1})}.
\end{equation}
It can be also proved by induction that
\[v_p(B_n)=v_p(b_0)+v_p(b_1)+\ldots+v_p(b_n),\]
for all $n\in\mathbb{N}$. Thus, considering \eqref{Eq: val2} and \eqref{Eq: val}, the study of the quality of the approximations of $\alpha$ is related to the decreasing of $v_p(B_n)$ and consequently to the values of $v_p(b_n)$. From the definitions of the three algorithms, we know that:
\begin{enumerate}
    \item[i)] $v_p(b_n)<0$ for all $n\in\mathbb{N}$, for \textit{Browkin I}
    \item[ii)] $v_p(b_{2n})=0$ and $v_p(b_{2n+1})<0$ for all $n\in\mathbb{N}$, for \textit{Browkin II}
    \item[iii)] $v_p(b_{2n})\leq 0$ and $v_p(b_{2n+1})<0$ for all $n\in\mathbb{N}$, for Algorithm \eqref{new}.
\end{enumerate}
Therefore, we expect the approximations given by the convergents of continued fractions obtained by \textit{Browkin I} to be better than those obtained by Algorithm \eqref{new}, which should be better than \textit{Browkin II}.
In Table \ref{Tab: approx} we list, for some values of $p$, the mean valuation after $10$, $100$ and $1000$ steps through \textit{Browkin I}, \textit{Browkin II} and Algorithm \eqref{new}.

\begin{table}[H]
\centering
\begin{tabular}{|l|l|l|l|}
\hline 
  p=5  & 10 steps  & 100 steps & 1000 steps \\
  \hline
\ \textit{Browkin I} & \ \ -11.1 & \ \   -123.1 & \ \ -1246.4  \\
\hline
\ \textit{Browkin II} & \ \ -6.1 & \ \  -62.8 & \ \ -629.8 \\
\hline
\ Algorithm \eqref{new} & \ \ -7.0 & \ \ -73.6  &\ \ -744.3  \\
\hline
\end{tabular}
\ \\ \ \\ \ \\
\begin{tabular}{|l|l|l|l|}
\hline 
  p=23  & 10 steps  & 100 steps & 1000 steps \\
  \hline
\ \textit{Browkin I} & \ \ -9.4 &\ \ -103.4 & \ \ -1043.8  \\
\hline
\ \textit{Browkin II} & \ \ -5.2 &\ \  -52.6 & \ \ -525.8 \\
\hline
\ Algorithm \eqref{new} & \ \ -5.4 &\ \  -54.7  &\ \ -547.8  \\
\hline
\end{tabular}
\ \\ \ \\ \ \\
\begin{tabular}{|l|l|l|l|}
\hline 
  p=47  & 10 steps  & 100 steps & 1000 steps \\
  \hline
\ \textit{Browkin I} & \ \ -9.2 &\ \  -101.2 & \ -1021.6  \\
\hline
\ \textit{Browkin II} & \ \ -5.0 &\ \  -50.9 & \ -508.9 \\
\hline
\ Algorithm \eqref{new} & \ \ -5.1 &\ \ -52.0  &\ -520.5  \\
\hline
\end{tabular}
\ \\ \ \\ \ \\
\begin{tabular}{|l|l|l|l|}
\hline 
  p=89  & 10 steps  & 100 steps & 1000 steps \\
  \hline
\ \textit{Browkin I} & \ \  -9.1 &\ \  -100.1 & \ -1010.3  \\
\hline
\ \textit{Browkin II} & \ \  -5.0 &\ \   -50.5 & \ -504.3 \\
\hline
\ Algorithm \eqref{new} & \ \  -5.1 &\ \  -50.9  &\ -509.2 \\
\hline
\end{tabular}
\ \\ 
\caption{\label{Tab: approx}Values of $v_p(B_n)$ with \textit{Browkin I}, \textit{Browkin II} and Algorithm \eqref{new} after 10, 100 and 1000 steps for $p=5,23,47,89$.}
\end{table}

The experimental results listed in the previous tables are in line with the considerations on the valuation of the partial quotients of the three algorithms. In fact, \textit{Browkin I} decreases the valuation of $B_n$ at each step, \textit{Browkin II} at half of the steps and Algorithm \eqref{new} on slightly more than half of the steps. Let us compare the quality of this approximation, given by the convergents of a continued fraction, with the classical rational approximation given by the $p$--adic expansion of $\alpha$ stopped at the $n$-th term. For  $\alpha=\sum\limits_{i=0}^{+\infty}a_ip^i\in\mathbb{Q}_p$, the sequence $\{C_n\}_{n\in\mathbb{N}}$, with $C_n=a_0+a_1p+\ldots+a_np^n$, approximates $\alpha$ with error
\[\left|\alpha-C_n\right|_p=|a_{n+1}p^{n+1}+\ldots|_p\leq\frac{1}{p^{n+1}}.\]
Therefore $C_n$ usually provides a better approximation than \textit{Browkin II} and Algorithm \eqref{new} but worse than \textit{Browkin I}.

\section{Conclusions and further research}
In this paper we have defined a new algorithm, obtained as a small modification of the $p$--adic continued fraction algorithm presented in \cite{BII}, namely \textit{Browkin II}. 
%The latter algorithm, \textit{Browkin II}, is considered the best one at the state of the art, since it is the nearest to a $p$--adic analogue of Lagrange's Theorem. 
Algorithm \eqref{new} improves the properties of periodicity of \textit{Browkin II} both in theoretical and experimental results. In particular, for $p$-adic continued fractions obtained by Algorithm \eqref{new}, an analogue of the Galois' Theorem holds and the pre--period for the expansion of square root of integers is always of length 1, like in the real case. Moreover, the numerical computations suggest that Algorithm \eqref{new} provides more periodic expansion for quadratic irrationals than \textit{Browkin II}.
Hence, it turns out that the sign function in \eqref{Br2} affects only negatively the behaviour of this algorithm. Moreover, as highlighted in Remark \ref{Rem: exp}, the two algorithm tend to be similar for large values of $p$, where the sign function is rarely used. Therefore, it could be abandoned without, apparently, any negative effect. However, the problem of finding a $p$--adic algorithm which becomes eventually periodic on every quadratic irrational still remains open. In particular, effective characterizations for periodic continued fractions provided by Algorithms \eqref{Br1}, \eqref{Br2} and \eqref{new} have not been proved.
One important matter for which we do not have an answer yet is why the periodicity properties of \textit{Browkin I} improve drastically when alternating the functions $s$ and $t$ instead of using only the function $s$ (see Figure \ref{Fig: number}). 
Finally, in light of Remark \ref{Rem: perd}, it would be interesting to deepen the study of the lengths of the periods, providing some upper bounds that could be exploited for proving that an analogue of the Lagrange's Theorem does not hold for the considered algorithms. 
%In fact, long periods for $\sqrt{D}$ are very rare, also for large values of $D$ and the prime $p$.
\appendix

\section{Tables}\label{App: A}
In the following tables we collect the computational results about the periodicity properties of Algorithms \eqref{Br1},\eqref{Br2} and \eqref{new}. All the computations have been performed on the first $1000$ complete quotients of $\sqrt{D}\in\mathbb{Q}_p$, for all the odd primes $p$ less than $100$ and $1\leq D\leq 1000$, with $D$ not a square and $v_p(D)=0$. The numerical simulations have been performed in SageMath and the code is publicly available \footnote{\href{https://github.com/giulianoromeont/p-adic-continued-fractions}{https://github.com/giulianoromeont/p-adic-continued-fractions}}. The tables collect results about:
\begin{enumerate}
    \item[•] the number of square roots which are periodic within $1000$ steps,
    \item[•] the mean length of the period
    \item[•] the value $h$ such that $75\%$ of the lengths of the periods detected are less or equal $h$,
    \item[•] the value $h$ such that $90\%$ of the lengths of the periods detected are less or equal $h$,
    \item[•] the total number of positive integers $D$ less than $1000$ such that $\sqrt{D}\in\mathbb{Q}_p$, $D$ is not a square and $v_p(D)=0$.
\end{enumerate}

\newpage
\begin{table}[!ht]
\centering
\begin{tabular}{|l|l|l|l|l|l|}
\hline 
  p  & Periodic & Mean period & \ 75\% &\ 90\% & Total\\
  \hline
\ 3 & \ \ \ \  26 &\ \ \ \ \ 37.46 & \ 52 &\ 88 &\  313 \\
\hline
\ 5 & \ \ \ \  16 & \ \ \ \ \ 14.25 & \ 14 &\ 24 &\  375\\
\hline
\ 7 & \ \ \ \ 17 & \ \ \ \ \ \ 8.24 & \ \ 8 &\ 16 &\  402  \\
\hline
11 & \ \ \ \ \ 9 &\ \ \ \ \ \ \  4  &\ \  4 &\ \ 6 & \ 426     \\
\hline
13 &\ \ \ \ \  9 &\ \ \ \ \  \ 5.56  &\ \  6 &\ \  6 & \ 433      \\
\hline
17 &\ \ \ \ \  8 &\ \ \ \ \ \  3.25  &\  \ 4 &\  \ 4 & \ 440      \\
\hline
19 & \ \ \ \  10 &\ \ \ \ \ \  2.4       &\  \ 2 &\ \  2 & \ 445  \\
\hline
23 & \ \ \ \ \ 3 &\  \ \ \ \  \  \ 2      &\  \ 2 &\ \  2 & \ 450  \\
\hline
29 & \ \ \ \ \ 9  &\  \ \ \ \ \   \  2    &\  \ 2 &\ \  2 & \ 453 \\
\hline
31 & \ \ \ \  13  &\ \ \ \  \ \     2.62  &\  \ 2 &\  \ 2 & \ 456   \\
\hline
37 & \ \ \ \ \ 4  &\ \ \ \ \  \  \    2  &\  \ 2 &\ \  2 & \ 456   \\
\hline
41 & \ \ \ \  11 &\ \ \ \ \  \  \   2  &\ \  2 &\ \  2 & \ 457   \\
\hline
43 & \ \ \ \ \ 8  &\  \ \ \ \ \    \ 2 &\ \  2 &\  \ 2 & \ 458    \\
\hline
47 & \ \ \ \ \ 4  &\  \ \ \ \ \   \  2 &\ \  2 &\ \  2 & \ 461    \\
\hline
53 & \ \ \ \ \ 8  &\  \ \ \ \  \  \  2  &\ \  2 &\ \  2 & \ 460   \\
\hline
59 & \ \ \ \ \ 5  &\  \ \ \ \  \   2.8  &\  \ 2 &\  \ 2 & \ 461   \\
\hline
61 & \ \ \ \ \ 4  &\  \ \ \ \ \ \    2  &\  \ 2 &\ \  2 & \ 462   \\
\hline
67 & \ \ \ \ \ 4  &\  \ \ \ \  \  \  2 &\ \  2 &\  \ 2 & \ 462   \\
\hline
71 & \ \ \ \ \ 7  &\  \ \ \ \ \   \  2  &\  \ 2 &\  \ 2 & \ 465   \\
\hline
73 & \ \ \ \ \ 0  &\ \ \ \  \    \text{none}  & \text{none} & \text{none} & \ 462   \\
\hline
79 & \ \ \ \ \ 4  &\  \ \ \ \  \ \   2 &\  \ 2 &\ 2 & \ 468   \\
\hline
83 & \ \ \ \ \ 2  &\  \ \ \ \  \  \  3  &\ \  2 &\ 2 & \ 464   \\
\hline
89 & \ \ \ \ \ 6  &\  \ \  \ \ \  \  2  &\  \ 2 &\ 2 & \ 466   \\
\hline
97 & \ \ \ \ \ 1  &\ \ \ \ \  \  \   2  &\ \  2 &\ 2 & \ 464   \\
\hline

\end{tabular}
\caption{\label{Tab: Br1}\textit{Browkin I}}
\end{table}

\newpage
\begin{table}[!ht]
\centering
\begin{tabular}{|l|l|l|l|l|l|}
\hline 
   p  & Periodic & Mean period &\ 75\% &\ 90\% & Total\\
  \hline
\ 3 & \ \ \ \ 68 & \ \ \ \ \ 22.09 &\ 26 &\ 42 & \ 313 \\
\hline
\ 5 & \  \ \ \ 67 & \ \ \ \ \  17.37 &\ 22 &\ 36 & \ 375\\
\hline
\ 7 & \  \ \ \ 68 &\ \ \ \ \  18.29 &\ 22 &\ 42 & \ 402  \\
\hline
11 & \  \ \ \ 80 &\ \ \ \ \  11.10  &\ 16 &\ 22 & \ 426     \\
\hline
13 & \ \ \ \  89 &\ \ \ \ \ \   9.96  &\ 10 &\ 18 & \ 433      \\
\hline
17 & \ \ \  109 &\  \ \ \ \ \   8.97 &\ 10 &\ 20 & \ 440      \\
\hline
19 & \ \ \ \ 97 &\ \ \ \ \ \    8.97   &\ 10 &\ 14 & \ 445  \\
\hline
23 & \ \ \ 102 &\ \ \ \ \  \   8.70      &\ 10 &\ 20 & \ 450  \\
\hline
29 & \ \ \ 111  &\  \ \ \ \  \     7.21   &\ \ 8 &\ 14 & \ 453 \\
\hline
31 & \ \ \ 118  &\  \ \ \ \  \     7.12  &\ \  8 &\ 14 & \ 456   \\
\hline
37 & \ \ \ 121  &\ \ \ \ \   \     5.98  &\ \  6 &\ 12 & \ 456   \\
\hline
41 & \ \ \ 117 &\  \ \ \ \  \    5.23  &\ \  6 &\ 10 & \ 457   \\
\hline
43 & \ \ \ 117  &\ \ \ \ \  \     5.09 &\ \  6 &\ 10 & \ 458    \\
\hline
47 &  \ \ \ 110  &\ \ \ \ \  \    5.05 &\ \  6 &\ 10 & \ 461    \\
\hline
53 &  \ \ \ 118  &\  \ \ \ \   \    3.98 &\ \  6 &\ \  8 & \ 460   \\
\hline
59 &  \ \ \ 121  &\  \ \ \ \   \    4.08  &\ \  6 &\ \  6 & \ 461   \\
\hline
61 &  \ \ \ 124  &\  \ \ \ \   \   3.45 &\ \  4 &\ \  6 & \ 462   \\
\hline
67 &  \ \ \ 121  &\  \ \ \ \   \     3.30 &\ \  4 &\ \  6 & \ 462   \\
\hline
71 &  \ \ \ 119  &\ \ \ \ \   \     3.41  &\ \  4 &\ \  6 & \ 465   \\
\hline
73 &  \ \ \ 125  &\ \ \ \ \   \    3.10  &\ \  4 &\ \  6 & \ 462   \\
\hline
79 &  \ \ \ 120  &\ \ \ \ \    \     3.17 &\ \  2 &\ \  6 & \ 468   \\
\hline
83 &  \ \ \ 122  &\ \ \ \ \   \     3.13  &\ \  2 &\ \  6 & \ 464   \\
\hline
89 &  \ \ \ 127  &\ \ \ \ \   \     2.82  &\ \  2 &\ \  6 & \ 466   \\
\hline
97 &  \ \ \ 135  &\ \ \ \ \  \      2.58  &\ \  2 &\ \  4 & \ 464   \\
\hline

\end{tabular}
\caption{\label{Tab: Br2}\textit{Browkin II}}
\end{table}

\newpage
\begin{table}[!ht]
\centering
\begin{tabular}{|l|l|l|l|l|l|}
\hline 
   p  & Periodic & Mean period &\ 75\% &\ 90\% & Total\\
  \hline
\ 3 & \ \ \ \  42 & \ \ \ \ \ 57.38 &\ 72 &\ 112 & \ 313 \\
\hline
\ 5 & \ \ \ \  81 &\ \ \ \ \ 35.01 &\ 42 &\ 70 & \ 375\\
\hline
\ 7 & \ \ \ \  88 &\ \ \ \ \  31.50 &\ 38 &\ 80 & \ 402  \\
\hline
11 & \ \ \ \  99 &\ \ \ \ \  16.89  &\ 22 &\ 30 & \ 426     \\
\hline
13 &\ \ \  106 &\  \ \ \ \ 15.17  &\ 18 &\ 30 & \ 433      \\
\hline
17 &\ \ \   118 &\ \ \ \ \  10.02 &\ 14 &\ 22 & \ 440      \\
\hline
19 & \ \ \  108 &\ \ \ \ \ \  8.91   &\ 10 &\ 18 & \ 445  \\
\hline
23 & \ \ \  113 &\  \ \ \ \ \  8.97   &\ 10 &\ 22 & \ 450  \\
\hline
29 & \ \ \  123 &\ \ \ \ \ \    8.36   &\ 10 &\ 18 & \ 453 \\
\hline
31 & \ \ \  133  &\  \ \ \ \ \    7.38  &\  \ 8 &\ 14 & \ 456   \\
\hline
37 & \ \ \  125  &\  \ \ \ \ \    5.95  &\  \ 6 &\ 12 &\ 456   \\
\hline
41 & \ \ \ 121 &\  \ \ \ \ \     5.60  &\  \ 6 &\ 10 & \ 457   \\
\hline
43 & \ \ \  122  &\  \ \ \ \ \   5.38 &\ \  6 &\  10 & \ 458    \\
\hline
47 & \ \ \  117  &\ \ \ \ \ \    5.15 &\ \  6 &\  10 & \ 461    \\
\hline
53 & \ \ \  120  &\ \ \ \ \ \    4.10 &\ \ 6 &\  \ 8 & \ 460   \\
\hline
59 & \ \ \  126  &\  \ \ \ \ \   4.33  &\ \  6 &\   10 & \ 461   \\
\hline
61 & \ \ \  133  &\  \ \ \ \ \  4.05 &\ \  6 &\ \  8 & \ 462   \\
\hline
67 & \ \ \  124  &\  \ \ \ \ \   3.42 &\ \  4 &\ \  6 & \ 462   \\
\hline
71 & \ \ \  119  &\ \ \ \ \ \    3.41  &\ \  4 &\  \ 6 & \ 465   \\
\hline
73 & \ \ \  128  &\  \ \ \ \ \  3.31  &\ \  4 &\ \  6 & \ 462   \\
\hline
79 & \ \ \  123  &\ \ \ \ \ \   3.27 &\ \  2 &\ \  6 & \ 468   \\
\hline
83 & \ \ \  122  &\  \ \ \ \ \    3.13  &\ \  2 &\ \  6 & \ 464   \\
\hline
89 & \ \ \  131 &\ \ \ \ \ \   2.98  &\ \  2 &\ \  6 & \ 466   \\
\hline
97 & \ \ \  138  &\  \ \ \ \ \   2.70  &\ \ 2 &\ \  6 & \ 464   \\
\hline

\end{tabular}
\caption{\label{Tab: New}Algorithm \eqref{new}}
\end{table}

\newpage

\end{document}